\crefname{equation}{}{}
\Crefname{Equation}{}{}
\pgfplotsset{compat=newest} 
\pgfplotsset{plot coordinates/math parser=false}
\numberwithin{equation}{section}
	\newif\ifshowcomments								
	\definecolor{mygrey}{gray}{0.65}
		\newcommand{\scomment}[1]{{\color{mygrey} [#1]}}  
		\newcommand{\scomment}[1]{\iffalse #1 \fi}
\newlength\fwidth
\DeclareMathOperator{\sgn}{sgn}
\DeclareMathOperator{\BV}{BV}
\theoremstyle{plain}						
\newtheorem{theorem}{Theorem}[section]
\newtheorem{lemma}[theorem]{Lemma}
\newtheorem{proposition}[theorem]{Proposition}
\theoremstyle{definition}
\newtheorem{definition}[theorem]{Definition}
\newtheorem{example}[theorem]{Example}
\newtheorem{assumption}[theorem]{Assumption}
\theoremstyle{remark}
\newtheorem{remark}[theorem]{Remark}
\newcommand{\Z}{\mathbb{Z}}
\newcommand{\N}{\mathbb{N}}
\newcommand\dy{\mathit{dy}}
\newcommand{\Ro}{\rho^{\max}_1}
\newcommand{\Rt}{\rho^{\max}_2}
\newcommand{\Rtr}{\rho^{\max}_3}
\newcommand{\Rr}{\rho^{\max}_e}
\newcommand{\po}{\rho_1}
\newcommand{\pt}{\rho_2}
\newcommand{\pe}{\rho_e}
\newcommand{\Dx}{\Delta x}
\newcommand{\Dt}{\Delta t}
\newcommand{\norm}[1]{{\left\|#1\right\|}}
\newcommand{\abs}[1]{{\left|#1\right|}}
\newcommand{\ndt}{{\eta}}
\newcommand{\wt}{\omega_{\ndt}}
\newcommand{\Ne}{N_\ndt}
\newcommand{\coupling}{g}
\newcommand{\testf}{\phi}
\newcommand{\dr}{\rho^{\Dx}}
\begin{document}

\title{Network models for nonlocal traffic flow}
\author{Jan Friedrich\footnotemark[1], \; Simone Göttlich\footnotemark[1], \; Maximilian Osztfalk\footnotemark[1]}

\date{\today}

\maketitle
\footnotetext[1]{University of Mannheim, Department of Mathematics, 68131 Mannheim, Germany (\{jan.friedrich, goettlich\}@uni-mannheim.de, max.osztfalk@gmail.com ).}

\begin{abstract}
We present a network formulation for a traffic flow model with nonlocal velocity in the flux function.
The modeling framework includes suitable coupling conditions 
at intersections to either ensure maximum flux or distribution parameters. Based on an upwind type numerical scheme, we prove the maximum principle and the existence of weak solutions on networks.
We also investigate the limiting behavior of the proposed models when the nonlocal influence tends to infinity.
Numerical examples show the difference between the proposed coupling conditions and a comparison to the Lighthill-Whitham-Richards network model. 

  \medskip

  \noindent\textit{AMS Subject Classification:} 35L65, 65M12, 90B20

  \medskip

  \noindent\textit{Keywords:} nonlocal scalar conservation laws, traffic flow networks, coupling conditions, upwind scheme
\end{abstract}

\begin{acronym}
\acro{BV}{bounded variation}
\acro{CFL}{Courant-Friedrichs-Levy}
\acro{ENO}{essential non oscillatory}
\acro{LWR}{Lighthill-Whitham-Richards}
\acro{LxF}{Lax-Friedrichs}
\acro{ODE}{ordinary differential equation}
\acro{PDE}{partial differential equation}
\acro{TVD}{total variation diminishing}
\acro{WENO}{weighted essential non oscillatory}
\acro{CWENO}{compact weighted essential non oscillatory}
\end{acronym}


\section{Introduction}
Macroscopic traffic flow models have been studied by researchers for several decades.
All started with the famous Lighthill-Whitham-Richards (LWR) \cite{LighthillWhitham, Richards} model, which has been introduced in the 1950's.
Since then the approach of modeling traffic flow by  conservation law has been extended in many directions.
A second equation has been introduced to describe the evolution of the speed and include acceleration \cite{aw2000resurrection,ZHANG2002traffic}. The LWR model has been further adapted to multilane traffic flow \cite{colombo2006multilane, greenberg2003congestion, helbing1997modeling, herty2003modeling, HoldenRisebro2019dense, moridpour2010surveymultilane} and complex road networks \cite{coclite2005network,garavellohanpiccoli2016book,GaravelloPiccoliBook,holden1995network, herty2003modeling}.
From an application point of view, macroscopic models can be used to investigate the so-called capacity drop effect and to optimize traffic flow \cite{colombo2011modelling,santo2019capacity, goatin2016speed,HautBastinChitour,herty2003modeling,kolb2017capacity}.

Nowadays, with the progress in autonomous driving, new challenges in road traffic arise. 
In order to manage these challenges mathematically, nonlocal traffic flow models have been introduced \cite{BlandinGoatin2016,ChiarelloGoatin,forcadel2021nonlocal, friedrich2018godunov, GoatinScialanga}.
They include more information in a certain nonlocal range about the traffic of the road.
This nonlocal range can stand for the connection radius of autonomous cars or for the sight of a driver.
Nonlocal models for traffic flow are widely studied in current research concerning existence of solutions \cite{BlandinGoatin2016,ChiarelloGoatin,friedrich2018godunov, GoatinScialanga,KeimerPflug2017}, numerical schemes \cite{BlandinGoatin2016,  chalons2018high, friedrich2019maximum, friedrich2018godunov, GoatinScialanga} or convergence to local conservation laws  \cite{bressan2020entropy,bressan2020traffic,coclite2020general,colombo2020local,keimer2019local} - even, in general, this question is still an open problem. 
Modeling approaches include microscopic models \cite{friedrich2020micromacro, shen2019stationary,GoatinRossi2017, RidderShen}, second order models \cite{friedrich2020micromacro}, multiclass models \cite{ChiarelloGoatinmulticlass}, multilane models \cite{BayenKeimer-preprint,friedrich2020nonlocal} and also time delay models \cite{keimer2019nonlocal}.
But to the best of our knowledge only a few works deal with network models.

Some first attempts can be found in \cite{camilli2018measure,chiarelloFriedrichGoatinGK, shen2019stationary, keimer2018nonlocal}.
The work \cite{camilli2018measure} considers measure valued solutions for nonlocal transport equations and \cite{keimer2018nonlocal} deals with nonlocal conservation laws on bounded domains while \cite{chiarelloFriedrichGoatinGK, shen2019stationary} includes 1-to-1 junctions.
In \cite{chiarelloFriedrichGoatinGK}, the existence and well-posedness of solutions at a 1-to-1 junction is shown, where the roads are allowed to differ in the velocity and maximum road capacities.
To the best of our knowledge, network models for other types of junctions have not been studied in literature and a general framework to deal with nonlocal traffic flow models on a network, similar to the LWR model, is missing in current research.

Hence, in this work we propose a network formulation for a class of nonlocal traffic flow models.
The work is thereby structured as following: in the next section we introduce the network setting. 
We concentrate on single junctions and define a weak solution. 
Additionally, we introduce rather general assumptions on the coupling conditions necessary to obtain a well-posed model.
It turns out that the 1-to-1 junction model from \cite{chiarelloFriedrichGoatinGK} fulfills these assumptions.
Furthermore, we also present two different explicit choices of the coupling conditions for 1-to-2 and 2-to-1 junctions which are inspired by well-studied couplings of the LWR model on a network.
In Section 3, we present a numerical scheme to solve the nonlocal equations on a network and argue that it converges to a weak solution.
In particular, we also prove that the numerical scheme and hence the weak solution inherit a suitable maximum principle.
Furthermore, we investigate the limit behavior of the junction models for a nonlocal range tending to infinity in Section 4. This case is of special interest for nonlocal traffic flow models since they are motivated by autonomous cars and an infinitely large range can be interpreted as prefect information for each driver.  
We close this work in Section 5 by numerically comparing the proposed coupling conditions for the nonlocal model.
On the one hand, we demonstrate differences between the nonlocal approaches and on the other hand, we give a comparison to the LWR network model.

\section{Network modeling}
Following the ideas presented in \cite{garavellohanpiccoli2016book,goatin2016speed} we describe a traffic flow network as a directed graph $G=(V,E)$, where the arcs $E$ correspond to roads and the vertices $V$ to junctions or intersections.

On each road $e\in E$ the density of cars is given by $\pe(x,t)$ and for a given initial state $\pe(x,0)\ \forall e\in E$ the dynamics are governed by conservation laws of the form 
\begin{align}\label{eq:network_model}
\partial_t \pe(t,x)+\partial_x f_e(t,x,\{\rho_k(t,\cdot)\}_{k\in E})=0\quad \forall e\in E, \, x\in (a_e,b_e),\, t>0.\end{align}
Equation \eqref{eq:network_model} allows for different choices of the flux functions $f_e$ as for example the well-studied \ac{LWR} model of local type
\begin{align}
    f_e(t,x,\{\rho_k(t,\cdot)\}_{k\in E})=\pe(t,x)v_e(\pe(t,x)),
\end{align}
where $v_e$ are suitable velocity functions.
On the other hand, the flux function can be of nonlocal type and hence depend on the whole density of the road $e$ at time $t$. 
In this work, we focus on the nonlocal case but first shortly give some remarks about the modeling of the local setting.

Generally, for solving hyperbolic \acp{PDE} on a network the boundary treatment is essential.
Therefore, the boundary (or coupling) conditions at junctions $v\in V$ have to be defined to ensure the conservation of mass.
The modeling of a junction in the case of the \ac{LWR} model is not unique and there exists a wide literature, see e.g. \cite{coclite2010vanishing,garavellohanpiccoli2016book,goatin2016speed}.
All these approaches describe the flow which passes directly through the junction depending on the purpose as for instance maximizing the flux through the junction \cite{goatin2016speed}, satisfying certain distribution rates \cite{garavellohanpiccoli2016book} or considering the vanishing viscosity approach \cite{coclite2010vanishing}.
Most of the models also share the similarity that they treat the junction only by determining the flows at the intersection point.
In contrast to that, for nonlocal fluxes in \eqref{eq:network_model} these boundary (or coupling) conditions are already present in a transition area in front of the junction point, coming from the nonlocal range.
This makes the network coupling of the nonlocal conservation laws more involved.

The model we are interested in this work is a nonlocal version of the \ac{LWR} model.
In the case of a single road the flux reads
\begin{align}\label{eq:nonlocalflux}
    f_1(t,x,\rho_1(t,\cdot))=\rho_1(t,x)V_1(t,x) \text{ with } V_1(t,x)=\int_x^{x+\ndt} v_1(\rho(t,y))\wt(y-x)dy
\end{align}
for any $\ndt>0.$
The model includes that drivers adapt their speed based on a weighted mean of  downstream velocities, where
$\ndt$ represents the nonlocal interaction range and $\wt$ is a kernel function.
In order to have a well-posed network model, we need the following assumptions.
\begin{assumption} \label{ass:vandkernel}
We impose the following assumptions on the velocity function $v_e$, $e\in E$, and the kernel function $\wt$:
\begin{equation} \label{eq:hypotheses}
\begin{aligned}
v_e \in C^2([0,\rho^{\max}_e];\mathbb{R}^+) & \colon v_e'\leq 0,\ v_e(\Rr)=0,\\
\wt \in C^1([0,\ndt];\mathbb{R}^+) & \colon \wt'\leq 0,\ \int_0^\ndt \wt(x) dx=1,\ \lim_{\ndt\to\infty}\wt(0)=0 \ \forall \ndt>0.
\end{aligned}
\end{equation}
\end{assumption}
The well-posedness under these assumptions on a single arc with $(a_1,b_1)=(-\infty,\infty)$ has been shown in \cite{friedrich2018godunov}.
Note that the considered kernel function $\wt$ is assumed to be equal for all roads $e \in E$ as it is rather a property of the driver than of the road.
In addition to the Assumptions \ref{ass:vandkernel}, we assume the following:
\begin{assumption}
We assume $\ndt <b_e-a_e\ \forall e\in E$.
\end{assumption}
This assumption ensures that the nonlocal range is restricted by the length $b_e-a_e$ of road $e$. Hence, we only consider one junction in the nonlocal downstream term. 

\subsection{Modeling of a junction}
In the following, we consider a set of incoming arcs $\mathcal{I}=\{1,\dots,M\}$ and a set of outgoing arcs $\mathcal{O}=\{M+1,\dots,M+N\}$ at a fixed junction.
In addition to simplify notation, we set $a_e=-\infty, \ \, b_e=0$ for $e=1,\dots,M$ and $a_e=0, \ \, b_e=\infty$ for $e=M+1,\dots,M+N$. 
As already noticed, coupling conditions are needed in order to be able to define a solution at the junction.
The coupling is induced by a function $\coupling_e$ and plays a crucial role in the junction modeling of network models governed by nonlocal conservation laws.
Inspired by equation \eqref{eq:nonlocalflux} we set
\begin{align}\label{eq:network_flux}
f_e(t,x,\{\rho_k\}_{k\in E})&= \begin{cases}\pe(t,x) V_e(t,x) +\coupling_e\left(\left\{\rho_i(t,\cdot)\right\}_{i\in\mathcal{I}}, \left\{ V_o(t,x)\right\}_{o\in\mathcal{O}}\right),&e\in\mathcal{I},\\[1ex]
 \pe(t,x)  V_e(t,x), &e\in\mathcal{O},
 \end{cases}
\end{align}
with 
\begin{align}\label{eq:velocity}
 V_e(t,x):=\begin{cases}
  \int_{\min\{x,0\}}^{\min\{x+\ndt,0\}} v_e(\pe(t,y)) \wt(y-x) dy,\quad &e\in\mathcal{I},\\[1ex]
 \int_{\max\{x,0\}}^{\max\{x+\ndt,0\}} v_e(\pe(t,y)) \wt(y-x) dy,\quad &e\in\mathcal{O}.
 \end{cases}
\end{align}
In addition, we couple equations \eqref{eq:network_model}, \eqref{eq:network_flux} with the initial conditions
\begin{align}\label{eq:initcond}
\rho_{e,0}(x)\in L^1 \cap \BV((a_e,b_e))\text{ for }x\in(a_e,b_e),\ e\in \{1,\dots,M+N\}.
\end{align}
This setting allows for different velocity functions and road capacities, respectively.
As can be seen the differences in the velocities are included by computing the mean velocity with the respective weights on the incoming or outgoing roads.
By having a closer look at the definition of $V_e(t,x)$ for $e\in \mathcal{O}$ in \eqref{eq:velocity} we recognize that the nonlocal velocities of the outgoing roads can become positive at $x=-\ndt$ as soon as drivers notice the junction and the properties of the next road.
The functional relationship on the coupling $\coupling_e$ do not only depend on the velocities of the outgoing roads but also on the densities of all incoming roads.
In particular, the changes in the maximum capacity $\rho^{\max}_e$ are captured by the function $\coupling_e$ such that no non-physical densities might occur.
Furthermore, we see that the velocities of the outgoing roads are important for all junction models due to the nonlocality of the problem, while the densities on the incoming roads play an important role at the junction for $M>1$.
Naturally, it can be reasonable to also derive models which include the velocities of all incoming roads or the densities of all outgoing roads.
Note that the latter are indirectly included through the nonlocal velocities.

Following \cite[Definition 4.2.4]{garavellohanpiccoli2016book} we define a weak solution for \eqref{eq:network_model} and \eqref{eq:network_flux} at a single junction as:
\begin{definition}[Weak solution]\label{def:weaksolnetwork}
A collection of functions $\pe\in C(\mathbb{R}^+;L^1_{loc}((a_e,b_e)))$ and $e\in\{1,\dots M+N\}$  with $a_e=-\infty, \ \, b_e=0$ for $e=1,\dots,M$ and $a_e=0, \ \, b_e=\infty$ for $e=M+1,\dots,M+N$, is called a weak solution of \eqref{eq:network_model} and \eqref{eq:network_flux} with \eqref{eq:velocity} if
\begin{enumerate}
\item for every $e \in \{1,\dots, M\}$ $\pe$ is a weak solution on $(-\infty,-\ndt)$ to
\begin{equation}\label{eq:NV}
\partial_t \pe(t,x) +\partial_x\left( \pe(t,x) \int_x^{x+\ndt} v_e(\pe(t,y))\wt (y-x)\dy\right)=0,
\end{equation}
and for every $e \in \{M+1,\dots,M+N\}$ on $(0,\infty)$ to \eqref{eq:NV};
\item for every $e \in \{1,\dots,M+N\}$ and for a.e. $t>0$, the function $x\mapsto \pe(t,x)$ has a bounded total variation;
\item for a.e. $t>0$, it holds 
\[\sum_{i=1}^M f_i(t,0-,\rho_i(t,0-))=\sum_{o=M+1}^{M+N} f_o(t,0+,\rho_o(t,0+));\]
\item the following integral equality holds
\begin{align*}
\sum_{i=1}^M \left( \int_{\mathbb{R}^+}\int_{-\infty}^0 \rho_i(t,x) \partial_t \testf_i(t,x)+ f(t,x,\rho_i(t,x)\partial_x \testf_i(t,x)dx dt\right)+\\
\sum_{o=M+1}^{M+N} \left( \int_{\mathbb{R}^+}\int_{0}^{+\infty} \rho_o(t,x) \partial_t \testf_o(t,x)+ f(t,x,\rho_o(t,x)\partial_x \testf_o(t,x)dx dt\right)=0
\end{align*}
for every collection of test function $\testf_i\in C_0^1((-\infty,0]\times \mathbb{R}_{\leq 0},\ i=1,\dots,M$ and $\testf_o\in C_0^1([0,\infty)\times \mathbb{R}_{\geq 0},\ o=M+1,\dots,M+N$ satisfying:
\[ \testf_i(0,\cdot)=\testf_o(0,\cdot),\qquad \partial_x\testf_i(0,\cdot)=\partial_x\testf_o(0,\cdot)\]
for $i\in \{1,\dots, M\}$ and $o\in\{M+1,\dots M+N\}$.
\end{enumerate}
\end{definition}

We note that in contrast to \cite[Definition 4.2.4]{garavellohanpiccoli2016book}, the functions $\pe$ have to be weak solutions {\em outside} the transition area and for $e \in \{M+1,\dots,M+N\}.$ 
This is ensured in the network setting by equation \eqref{eq:velocity}.
In addition to \cite[Definition 4.2.4]{garavellohanpiccoli2016book}, we introduce the last condition in Definition~\ref{def:weaksolnetwork} to also include the transition area into the definition of a weak solution.

Let us now define admissible conditions that weak solutions should satisfy.
Therefore, as in \cite{garavellohanpiccoli2016book}, we introduce the distribution matrix $A=(\alpha_{i,o})_{i=1,\dots,M,o=M+1,\dots, M+N}$ with $\sum_{o=M+1}^{N+M} \alpha_{i,o}=1$. Here, $\alpha_{i,o}$ determines the proportion of traffic coming from road $i$ and going to road $o$.

\begin{definition}[Admissible weak solution]\label{def:admisolnetwork} A collection of functions $\pe \in$ $C(\mathbb{R}^+;L^1_{loc}((a_e,b_e)))$ and $e\in\{1,\dots M+N\}$, with $a_e=-\infty, \ \, b_e=0$ for $e=1,\dots,M$ and $a_e=0, \ \, b_e=\infty$ for $e=M+1,\dots,M+N$, is called an admissible weak solution to \eqref{eq:network_model} and \eqref{eq:network_flux} if it is a weak solution in the sense of Definition \ref{def:weaksolnetwork} and additionally satisfies at least one of the following conditions: 
\begin{enumerate}[label=(\roman*)]
\item\label{item:admis1} If $\rho_i(t,x)\leq \rho_i^{\max}\, \forall i\in\{1,\dots,M\}$ a.e. $t>0$ and $x\in [-\ndt,0)$, then  for every $i \in \{1,\dots, M\}$ $\rho_i$ is an (entropy) weak solution on $(-\infty,0)$ to
\begin{equation*}
\partial_t \rho_i(t,x) +\partial_x\left( \rho_i(t,x) (V_i(t,x)+\sum_{o=M+1}^{N+M} \alpha_{i,o} V_o(t,x))\right)=0.
\end{equation*}
\item\label{item:admis2} For all $o\in\{M+1,\dots,M+N\}$ we have
\[f_o(t,0+,\rho_o(t,0+))=\sum_{i=1}^M \alpha_{i,o}f_i(t,0-,\rho_i(t,0-)).\]
\end{enumerate} 
\end{definition}
The first admissible condition introduced in Definition \ref{def:admisolnetwork} means the following: If no capacity restrictions of the outgoing roads affect the incoming road, the nonlocal model should produce the natural idea of the model \eqref{eq:nonlocalflux}.
This means that the drivers should adapt their velocity according to the mean velocity.
Here, the mean velocity is given by a part of the velocity on the current road and by a part of all outgoing roads weighted with the distribution parameters.
The second condition represents the distribution parameters which should be fulfilled at the junction.
As we will see, there exist couplings that do not always satisfy both conditions at the same time.
Hence, we have a kind of a trade off between satisfying the distribution parameters and the natural behavior of the model. 

The goal is now to find appropriate coupling functions $\coupling_e$ to close the model proposed in \eqref{eq:network_flux}.
We restrict to $M$-to-1 and 1-to-$N$ junctions since the coupling of $M$-to-$N$ junctions is more involved in the nonlocal setting. 
\begin{assumption}\label{assumptions}
We impose the following restrictions on the function $\coupling_e,\ e\in\mathcal{I}$ and the inflow on the outgoing roads $f_e(t,0),\ e\in\mathcal{O}$:
\begin{enumerate}
\item The inflow has to be positive and smaller than the maximum possible flow on the outgoing road, i.e.,
\begin{align}\label{eq:cond_inflow}
0\leq f_e(t,0) \leq \Rr V_e(t,0) \ \forall\ e\in\mathcal{O}.
\end{align}
\item The coupling function $\coupling_e$ has to be positive and smaller than the desired flow, i.e., the current density times the mean velocity of the outgoing roads:
\begin{align}
0 \leq \coupling_e\left(\left\{\rho_i(t,\cdot)\right\}_{i\in\mathcal{I}}, \left\{ V_o(t,x)\right\}_{o\in\mathcal{O}}\right)\leq \rho_e(t,x)\sum_{o=M+1}^{N+M} \alpha_{i,o} V_o(t,x).
\end{align}
\item The coupling function $\coupling_e$ does not decrease in the velocities of the outgoing roads and concerning the density of the road $e$ it is always smaller or equal than the corresponding maximum density, i.e.,
\begin{equation}\label{eq:cond_gdiff}
\begin{aligned}
\coupling_e(\pe(t,x),\dots)&\leq \coupling_e(\Rr,\dots)&\\
\coupling_e(\dots,V_k(t,x),\dots)&\leq \coupling_e(\dots,C,\dots)&\text{ for }k\in\mathcal{O},\ \forall C\ \geq V_k(t,x).
\end{aligned}
\end{equation}
\item The function $\coupling_e$ is Lipschitz continuous in $\pe$ with Lipschitz constant $L\leq \max_{e\in E} \| v_e \|_\infty$.
\end{enumerate}
\end{assumption}
The first two assumptions are limitations on the flows while  the third assumption states upper bounds on $\coupling_e$. 
Note that the second assumption together with the definition of the velocity \eqref{eq:velocity} implies that the first item of Definition \ref{def:weaksolnetwork} is satisfied for $e \in \{1,\dots, M\}$.

\subsection{Junction types}
In the following, we consider 1-to-1, 1-to-2 and 2-to-1 junctions, specify the assumptions from above and give concrete examples for the junction models.
The extensions to 1-to-N and M-to-1 junctions are then straightforward.
We note that by construction the proposed models satisfy 
the Rankine-Hugoniot condition, i.e., the third item of Definition \ref{def:weaksolnetwork}.

\subsubsection{1-to-1 junctions}
The 1-to-1 junction has been already studied extensively in \cite{chiarelloFriedrichGoatinGK}.
It has a kind of special role as it can be interpreted as a model on a single arc with the velocity function changing at the intersection point.
This consideration is not possible for the other types of junctions.
The coupling in \cite{chiarelloFriedrichGoatinGK} is given by
\begin{align} \label{eq:g}
\coupling_1(\rho_1(t,x),V_2(t,x))&:=\min\{\rho_1(t,x), \Rt\}V_2(t,x).
\end{align}
Under the Assumptions \ref{ass:vandkernel} the well-posedness and uniqueness of weak entropy solutions has been shown in \cite{chiarelloFriedrichGoatinGK}.
As we have just one incoming and one outgoing road, both items in the Definition \ref{def:admisolnetwork} are fulfilled.

\subsubsection{1-to-2 junctions}
For 1-to-2 junctions we have $M=1$ and $N=2$. So $\coupling_1$ only depends on $\po(t,x),\ V_2(t,x)$ and $V_3(t,x)$.
We need to prescribe distribution parameters $\alpha_{1,2}$ and $\alpha_{1,3}$ with $\alpha_{1,2}+\alpha_{1,3}=1$ which give us the desired flow from the incoming road to the outgoing roads.
These distribution rates can be for example determined through historical data.
Using the distribution rates we can specify the second condition of the Assumption \ref{assumptions}, i.e.,
\begin{align}\label{eq:cond_2for1to2}
0\leq \coupling_1(\po(t,x), V_2(t,x), V_3(t,x))\leq \po(t,x)\left(\alpha_{1,2}V_2(t,x)+\alpha_{1,3}V_3(t,x)\right).
\end{align}
The modeling choice of the function $\coupling_1$ is of course not unique. The function should preserve the densities in the given intervals (which is achieved by fulfilling Assumption \ref{assumptions}) and follow the purpose of modeling. Here we present two approaches: the first approach allows for a maximum flow (satisfying only \ref{item:admis1} in Definition~\ref{def:admisolnetwork}) and the second approach satisfies the distribution parameters at all costs (satisfying only  \ref{item:admis2} in Definition~\ref{def:admisolnetwork}).

\begin{example}\label{ex:12:maxflow}
The approach for the maximum flow is very similar to the 1-to-1 junction and is inspired by the model presented in \cite{goatin2016speed}.
Due to the distribution parameters the flow, from the incoming to one outgoing road is either given by the distribution rate times the flow or restricted by the maximum flow/capacity on the corresponding outgoing road. So we get
\begin{align}
\coupling_1(\po,V_2,V_3)=\min\{\alpha_{1,2}\po(t,x),\Rt\}V_2(t,x)+\min\{\alpha_{1,3}\po(t,x),\Rtr\}V_3(t,x).
\end{align} 
In the case that the capacity restrictions are not active we would simply have the flow defined by the density times mean velocity weighted by the distribution rates. The corresponding inflows on the outgoing roads are then given by:
\begin{align*}
f_e(t,0+,\po)=\min\{\alpha_{1,e}\po(t,0-),\Rr\}V_e(t,x),\quad e\in\{2,3\}.
\end{align*}
In addition, it is obvious that condition \ref{item:admis2} Definition~\ref{def:admisolnetwork} cannot be satisfied in all cases.
\end{example}

\begin{example}\label{ex:12:dist}
In order to always satisfy the distribution parameters, we adapt the idea introduced in \cite{garavellohanpiccoli2016book}.
The flow of the incoming road in the transition area is, if possible, the density times the mean (in terms of distribution rates) nonlocal velocity or the maximum feasible flows of the outgoing roads divided by the corresponding distribution, i.e.,
\begin{align}
\coupling_1(\po,V_2,V_3)=\min\left\{\po(t,x)(\alpha_{1,2}V_2(t,x)+\alpha_{1,3}V_3(t,x)),\frac{\Rt V_2(t,x)}{\alpha_{1,2}},\frac{\Rtr V_3(t,x)}{\alpha_{1,3}}\right\}
\end{align}
with the inflows
 \begin{align*}
f_e(t,0+,\pe)=\alpha_{1,e}f_1(t,0-,\po),\quad e\in\{2,3\}.
\end{align*}
If the desired flow is not restricted by the outgoing roads, also condition \ref{item:admis1} Definition~\ref{def:admisolnetwork} is satisfied.
\end{example}

\subsubsection{2-to-1 junctions}
Similar to the discussion above we intend to present again two models satisfying the Assumption \ref{assumptions}: one approach allows for the maximum possible flux and the other one satisfies the priority rules at all costs.
Therefore, priority rules have to be prescribed in the sense that the percentage of cars going from the incoming roads to the outgoing road is $q_{1,3}+q_{2,3}=1$.
In a first attempt, we assume the functional relationship
$\coupling_e(\pe(t,x),\rho_{e^-}(t,y(x)),V_3(t,x))$ with $e^-$ being the other incoming road and without specifying the density to be taken.
However, to keep a maximum principle, we also need to assume
\begin{align}\label{eq:cond_2to1}
\rho_{e^-}(t,y(x))=\rho_{e^-}(t,0-) \ \forall x\in(-\infty,0).
\end{align}
The inflow on the outgoing road is simply given by the sum of the two outflows at $x=0$, i.e.,
\begin{align*}
f_3(t,0+,\po,\pt)=\coupling_1(\po,\pt,V_3)+g_2(\pt,\po,V3).
\end{align*}
Therefore, both presented approaches satisfy \ref{item:admis2} Definition~\ref{def:admisolnetwork}.
\begin{example}\label{ex:21:maxflow}
To maximize the flux through the junction, several possibilities can be considered. 
First, the flux can pass on to the outgoing road without violating the flux restrictions coming from the maximum possible flux and the priority parameter.
Second, the flux restriction can become active. However, if there is not enough mass coming from the other incoming road we allow the flow to be higher allowing the maximum possible flow.
As in \cite{goatin2016speed}, this results in the following coupling function in the transition area:
\begin{align}
\coupling_e(\pe,\rho_{e^-},V_3)=\min\{\pe(t,x),\max\{ q_{e,3} \Rtr, \Rtr -\rho_{e^-}(t,0-)\}\}V_3(t,x).
\end{align}
\end{example}
\begin{example}\label{ex:21:prio}
To always satisfy the priority rules, we have to assume that $\pe(t,0)>0$ for all $t>0$ and $e\in\{1,2\}$.
Otherwise, the model does not give meaningful results since the solution is always to let no flow through the junction.
Inspired by \cite{garavellohanpiccoli2016book}, we want to maximize the flux through the junction but at the same time always satisfy the priority parameters.
Applying this idea to the transition area leads to
\begin{align}
 \coupling_e(\pe,\rho_{e^-},V_3)=\min\{\pe(t,x),q_{e,3} \Rtr, (q_{e,3}/q_{e^-,3})\rho_{e^-}(t,0-)\}V_3(t,x).   
\end{align}
\end{example}

\section{Numerical scheme}
So far we have presented junction models which seem to be a reasonable choice concerning the Definitions \ref{def:weaksolnetwork} and \ref{def:admisolnetwork}.
In this section, we now deal with the question of existence for weak solutions on networks.
To do so, we present a numerical discretization scheme of upwind type and consider its convergence properties.
We follow the ideas presented in \cite{chiarelloFriedrichGoatinGK} and \cite{friedrich2018godunov} to derive a scheme for \eqref{eq:network_model} and \eqref{eq:network_flux}.

The numerical scheme uses the following ingredients: For $j\in \Z, \ n\in \N$ and $e\in E$, let $x_{e,j-1/2}= j \Dx$ be the cell interfaces, $x_{e,j}=(j+1/2) \Dx$ the cells centers, corresponding to a  space  step  $\Dx $ such  that $\ndt=\Ne \Dx$ for  some $\Ne\in \N$, and let $t^n=n\Dt$ be the time mesh.
In particular, $x=x_{e,-1/2}=0$ is a cell interface.
Note that we assume the same step sizes for each road and that we have $j\geq 0$ iff $e\in\mathcal{O}$ and $j <0$ iff $e\in\mathcal{I}$.

The finite volume approximate solution is given by $\dr_e$ such that $\dr_e(t,x)=\rho_{e,j}^n$ for $(t,x)\in [t^n,t^{n+1}[\, \times  [x_{j-1/2},x_{j+1/2}[$ and $e\in E$. 
The initial datum $\rho_{e,0}$ in \eqref{eq:initcond} is approximated by the cell averages 
\begin{equation*}
\rho^{0}_{e,j}=\frac{1}{\Dx} \int_{x_{j-1/2}}^{x_{j+1/2}} \rho_{e,0}(x) dx, \qquad j\in\Z,\ e\in E.
\end{equation*}
Following \cite{chiarelloFriedrichGoatinGK,friedrich2018godunov}, we consider the numerical flux function
\begin{subequations}\label{eq:scheme}
\begin{equation}\label{eq:flux} 
F^n_{e,j} := 
\begin{cases}
\rho^n_{e,j} V^{n}_{e,j}+\coupling_e\left(\rho^n_{e,j},\{\rho^n_{i,-1}\}_{i\in\mathcal{I}},\{V^{n}_{o,j}\}_{o\in\mathcal{O}}\right), & e\in\mathcal{I}\\ 
\rho^n_{e,j} V^{n}_{e,j}, & e\in\mathcal{O},
\end{cases}
\end{equation}
with
    \begin{align}
 V^{n}_{e,j}&=
\begin{cases} 
 \sum_{k=0}^{\min\{-j-2,N_\eta-1\}} \gamma_k v_e(\rho^n_{e,j+k+1}), & e\in\mathcal{I},\\
\sum_{k=\max\{-j-1,0\}}^{N_\eta-1} \gamma_k v_e(\rho^n_{e,j+k+1}),& e\in\mathcal{O},
\end{cases}
 \label{eq:discretevelocity}\\
        \gamma_k&=\int_{k\Dx}^{(k+1)\Dx}\wt (x) dx, \qquad k=0,\ldots,N_{\eta}-1,\label{eq:kernel}
   \end{align}
where we set, with some abuse of notation, $\sum_{k=a}^b=0$ whenever $b<a$.
The influxes $F^n_{e,-1}$ for $e\in\mathcal{O}$ are defined by discrete versions of the fluxes given by the modeling approach.

In this way, we can define the following finite volume numerical scheme
    \begin{equation}\label{eq:evolution}
        \rho^{n+1}_{e,j}=\rho^n_{e,j}-\lambda \left(F^n_{e,j}-F^n_{e,j-1}\right)\text{ with }\lambda:= \frac{\Dt}{\Dx}.
    \end{equation}
\end{subequations}
Note that, due to the accurate calculation of the integral in \eqref{eq:kernel} and the definition of the convoluted velocities in \eqref{eq:discretevelocity}, there holds
\[0\leq V_j^{e,n} \leq v_e^{\max}\quad \forall\ j\in\mathbb{Z},\ n\in\mathbb{N},\ e \in E.\]
We set 
\[\Vert v \Vert:=\max_{e\in E}\Vert v_e\Vert_\infty, \quad \Vert v' \Vert:=\max_{e\in E}\Vert v'_e\Vert_\infty, \quad \Vert \rho \Vert:= \max_{e\in E}\Rr\] 
and consider the following CFL condition:
\begin{equation}\label{eq:CFL}
    \lambda \leq \frac{1}{\gamma_0 \Vert v'\Vert \Vert \rho \Vert +2\Vert v \Vert}.
\end{equation}
The proposed discretization for appropriate choices of $\coupling_e$ and the influxes are the basis to prove a maximum principle and the existence of weak solutions.

\subsection{Existence of weak solutions}
We prove that the scheme \eqref{eq:scheme} satisfies a maximum principle under the Assumption \ref{assumptions}.
We start by deriving some elementary inequalities concerning the differences of the velocities.
\begin{lemma}\label{lem:estimates}
Consider the velocities computed in \eqref{eq:discretevelocity} and let the conditions \eqref{eq:hypotheses} hold, then we have the following estimates
\begin{align}\label{eq:firstineq}
V_{e,j-1}^n-V_{e,j}^n&\leq\begin{cases}\begin{cases}
\gamma_0 \| v'\| (\Rr -\rho_{e,j}^n), &j\leq -1,\\
0, &j\geq 0,
\end{cases}& e\in\mathcal{I},\\
\begin{cases}
0, &j\leq -1,\\
\gamma_0 \| v'\| (\Rr -\rho_{e,j}^n), &j\geq 0,
\end{cases}& e\in\mathcal{O}.
\end{cases}
\end{align}
In addition, we have
\begin{align}\label{eq:secondineq}
V_{e,j-1}^n\Rr-V_{e,j}^n \rho_{e,j}^n&\leq\begin{cases}\begin{cases}
(\gamma_0 \| v'\| \|\rho\| +V_{e,j}^n)(\Rr -\rho_{e,j}^n), &j\leq -1,\\
V_{e,j}^n(\Rr -\rho_{e,j}^n), &j\geq 0,
\end{cases}& e\in\mathcal{I},\\
\begin{cases}
V_{e,j}^n(\Rr -\rho_{e,j}^n), &j\leq -1,\\
(\gamma_0 \| v'\| \|\rho\| +V_{e,j}^n) (\Rr -\rho_{e,j}^n), &j\geq 0,
\end{cases}& e\in\mathcal{O}.
\end{cases}
\end{align}
\end{lemma}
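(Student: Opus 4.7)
My plan is to reduce both inequalities to one essentially algebraic computation, based on rewriting $V_{e,j-1}^n - V_{e,j}^n$ by index shifting (Abel summation), and then to get the second inequality as a trivial consequence of the first.

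For the first inequality in the ``active'' cases ($e\in\mathcal I$ with $j\leq -1$, and $e\in\mathcal O$ with $j\geq 0$), I would plug the definitions \eqref{eq:discretevelocity} into $V_{e,j-1}^n - V_{e,j}^n$ and reindex the sum for $V_{e,j-1}^n$ via $k\mapsto k+1$, so that both sums are written over the same densities $\rho^n_{e,j+k+1}$. The result takes the form
\[
V_{e,j-1}^n - V_{e,j}^n \;=\; \gamma_0\, v_e(\rho^n_{e,j}) \;+\; \sum_{k} (\gamma_{k+1}-\gamma_k)\, v_e(\rho^n_{e,j+k+1}) \;-\; (\text{boundary term at }k = N_\eta{-}1),
\]
where the boundary correction appears exactly when the upper index saturates at $N_\eta-1$. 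Since $\wt'\leq 0$ gives $\gamma_{k+1}\leq \gamma_k$, and $v_e\geq 0$, the middle sum is $\leq 0$, and the boundary term enters with a sign that makes it non-positive as well. Thus $V_{e,j-1}^n - V_{e,j}^n \leq \gamma_0\, v_e(\rho^n_{e,j})$. Using $v_e(\Rr)=0$ together with $v_e'\leq 0$, one integration bounds $v_e(\rho) \leq \|v'\|(\Rr-\rho)$ on $[0,\Rr]$, which yields the desired estimate $\gamma_0 \|v'\|(\Rr-\rho^n_{e,j})$.

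For the two ``trivial'' cases ($e\in\mathcal I$ with $j\geq 0$ and $e\in\mathcal O$ with $j\leq -1$) the same manipulation now produces no $\gamma_0 v_e(\rho^n_{e,j})$ term, because the lower summation index of $V_{e,j-1}^n$ is strictly larger than $0$ (or both sums are empty). The only surviving contributions are non-positive, so $V_{e,j-1}^n - V_{e,j}^n \leq 0$, matching \eqref{eq:firstineq}. For the second inequality I would write
\[
V_{e,j-1}^n \Rr - V_{e,j}^n \rho^n_{e,j} \;=\; (V_{e,j-1}^n - V_{e,j}^n)\,\Rr \;+\; V_{e,j}^n\,(\Rr - \rho^n_{e,j}),
\]
insert the first inequality in each case, and use $\Rr \leq \|\rho\|$ to absorb the $\Rr$ factor into $\gamma_0\|v'\|\|\rho\|(\Rr-\rho^n_{e,j})$ in the active cases, and to drop the first term entirely in the trivial cases. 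The two bounds in \eqref{eq:secondineq} then appear verbatim.

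The one part that needs care, rather than being the genuinely hard step, is the bookkeeping around the piecewise definition of $V_{e,j}^n$ in \eqref{eq:discretevelocity}: one has to verify that the Abel-summation rewriting remains correct across the subcases where $-j-2$ is or is not saturated by $N_\eta-1$ (for $e\in\mathcal I$), and where $-j-1$ is or is not active (for $e\in\mathcal O$, in particular at the junction cell $j=0$ where the reindexed sum loses its $k=0$ term). Once one checks that in each subcase the leading $\gamma_0 v_e(\rho^n_{e,j})$ term appears exactly in the active cases and that every boundary correction has the correct sign, the chain of inequalities above closes the proof with no further work.
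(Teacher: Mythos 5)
Your proposal is correct and follows essentially the same route as the paper: the index-shift (Abel summation) isolating the leading term $\gamma_0 v_e(\rho^n_{e,j})$, the sign argument from $\gamma_{k}\leq\gamma_{k-1}$ and $v_e\geq 0$, the mean value theorem step $v_e(\rho)\leq\|v'\|(\Rr-\rho)$, and the identity $V^n_{e,j-1}\Rr-V^n_{e,j}\rho^n_{e,j}=(V^n_{e,j-1}-V^n_{e,j})\Rr+V^n_{e,j}(\Rr-\rho^n_{e,j})$ for the second estimate are all exactly what the paper does.
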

\begin{proof}
Let us consider the case $e\in\mathcal{I}$:
\begin{align*}
    V_{e,j}^{n}-V_{e,j}^{n}&=
    \begin{cases}
    \begin{aligned}
    &\sum_{k=1}^{\Ne-1}(\gamma_k-\gamma_{k-1})v_e(\rho_{e,j+k}^n)-\gamma_{\Ne-1}v_e(\rho_{e,j+\Ne}^n)\\
    &+\gamma_0 v_e(\rho_{e,j}^n),
    \end{aligned}
    \qquad &j\leq -\Ne-1,\\
    \sum_{k=1}^{-j-1}(\gamma_k-\gamma_{k-1})v_e(\rho_{e,j+k}^n)+\gamma_0 v_e(\rho_{e,j}^n),\qquad &-\Ne\leq j \leq -2,\\
    \gamma_0 v_e(\rho_{e,-1}^n),\qquad & j=-1,\\
    0,\qquad & j \geq 0,\\
    \end{cases}\\
    &\leq \begin{cases}
    \gamma_0 v_e(\rho_{e,-1}^n),\qquad & j\leq-1,\\
    0,\qquad & j \geq 0.\\
    \end{cases}
\end{align*}
Using $v_e(\Rr)=0$ and the mean value theorem yields inequality \eqref{eq:firstineq} for $e\in\mathcal{I}$. Then, the inequality \eqref{eq:secondineq} follows from multiplying the first equation by $\Rr$ and adding and subtracting $V_{e,j}^n\rho_{e,j}^n$.

The inequalities for $e\in\mathcal{O}$ can be obtained analogously.
\end{proof}
Next, we give the details of the proof in the particular case of 1-to-2 situations.
\begin{proposition}
Under hypothesis~\eqref{eq:hypotheses} and the CFL condition \eqref{eq:CFL}, the sequence generated by the numerical scheme~\eqref{eq:scheme} for a 1-to-2 junction with distribution parameters $\alpha_{1,2}+\alpha_{1,3}=1$ satisfies the following maximum principle:
\begin{align*}
    0\leq \rho_{1,j}^n\leq \Ro \quad \text{for}\quad j\leq -1\quad\text{ and }\quad 0\leq \rho_{e,j}^n\leq \Rr \quad \text{for}\quad j \geq 0,\quad e\in\{2,3\} \quad \forall n\in \mathbb{N}.
\end{align*}
\end{proposition}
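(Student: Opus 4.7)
I would prove the statement by induction on $n$, with the base case $n=0$ provided by the assumption on the initial data. Granted the inductive hypothesis $0\leq \rho_{e,j}^n\leq \rho^{\max}_e$ for all applicable $(e,j)$, I split the analysis by the position of the cell, since the flux~\eqref{eq:flux} takes three qualitatively different forms: plain nonlocal upwind on the two outgoing roads, nonlocal upwind plus coupling on the incoming road for $j\leq -2$, and a degenerate form at $j=-1$ where $V_{1,-1}^n=0$ because the sum in~\eqref{eq:discretevelocity} is empty.

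For the outgoing roads ($e\in\{2,3\}$) the update reads $\rho_{e,j}^{n+1}=\rho_{e,j}^n(1-\lambda V_{e,j}^n)+\lambda\rho_{e,j-1}^n V_{e,j-1}^n$ for $j\geq 1$, with the last term replaced by $\lambda F_{e,-1}^n$ at $j=0$. The lower bound follows from Assumption~\ref{assumptions}(1) (giving $F_{e,-1}^n\geq 0$) and from $1-\lambda V_{e,j}^n\geq 0$ under the CFL condition~\eqref{eq:CFL}. For the upper bound I use $\rho_{e,j-1}^n V_{e,j-1}^n\leq \rho^{\max}_e V_{e,j-1}^n$ (and at $j=0$ the discrete inflow cap $F_{e,-1}^n\leq \rho^{\max}_e V_{e,-1}^n$ from Assumption~\ref{assumptions}(1), which is satisfied by the models in Examples~\ref{ex:12:maxflow} and~\ref{ex:12:dist}) together with the second inequality of Lemma~\ref{lem:estimates}, to reduce $\rho_{e,j}^{n+1}-\rho^{\max}_e$ to $(\rho_{e,j}^n-\rho^{\max}_e)(1-\lambda\gamma_0\|v'\|\|\rho\|-\lambda V_{e,j}^n)$, whose two factors have opposite signs under~\eqref{eq:CFL}.

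On the incoming road, writing $g_j^n:=\coupling_1(\rho_{1,j}^n,\rho_{1,-1}^n,V_{2,j}^n,V_{3,j}^n)$, the update for $j\leq -2$ is $\rho_{1,j}^{n+1}=\rho_{1,j}^n(1-\lambda V_{1,j}^n)+\lambda \rho_{1,j-1}^n V_{1,j-1}^n-\lambda(g_j^n-g_{j-1}^n)$, while at $j=-1$ the transport term drops. The lower bound uses the flux cap $g_j^n\leq \rho_{1,j}^n(\alpha_{1,2}V_{2,j}^n+\alpha_{1,3}V_{3,j}^n)\leq \rho_{1,j}^n\|v\|$ from Assumption~\ref{assumptions}(2), yielding $\rho_{1,j}^{n+1}\geq \rho_{1,j}^n(1-\lambda V_{1,j}^n-\lambda\|v\|)$ plus non-negative contributions, with the bracket non-negative by~\eqref{eq:CFL}. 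For the upper bound at $j\leq -2$, mimicking the outgoing case but using the first inequality of Lemma~\ref{lem:estimates} on the incoming side, I arrive at
\[
\rho_{1,j}^{n+1}-\rho^{\max}_1 \leq (\rho_{1,j}^n-\rho^{\max}_1)\bigl(1-\lambda\gamma_0\|v'\|\|\rho\|-\lambda V_{1,j}^n\bigr)+\lambda(g_{j-1}^n-g_j^n),
\]
and an analogous inequality (with the $\lambda V_{1,j}^n$ term dropped) holds at $j=-1$.

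The main obstacle is controlling the coupling difference $g_{j-1}^n-g_j^n$. I would split it as a velocity-only jump plus a density-only jump,
\[
\bigl[\coupling_1(\rho_{1,j-1}^n,\cdot,V_{2,j-1}^n,V_{3,j-1}^n)-\coupling_1(\rho_{1,j-1}^n,\cdot,V_{2,j}^n,V_{3,j}^n)\bigr]+\bigl[\coupling_1(\rho_{1,j-1}^n,\cdot,V_{2,j}^n,V_{3,j}^n)-\coupling_1(\rho_{1,j}^n,\cdot,V_{2,j}^n,V_{3,j}^n)\bigr].
\]
Since $V_{o,j-1}^n\leq V_{o,j}^n$ for $o\in\{2,3\}$ and $j\leq -1$ (by applying the first inequality~\eqref{eq:firstineq} to the outgoing roads, which gives a non-positive bound in this regime), the monotonicity of $\coupling_1$ in $V_o$ from Assumption~\ref{assumptions}(3) makes the first bracket non-positive. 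For the second bracket, combining the Lipschitz continuity from Assumption~\ref{assumptions}(4) with the density-monotonicity content of Assumption~\ref{assumptions}(3) and the induction hypothesis $\rho_{1,j-1}^n\leq \rho^{\max}_1$ yields the bound $L(\rho^{\max}_1-\rho_{1,j}^n)$. Since $L\leq \|v\|$, this absorbs into the bracket above to produce the effective coefficient $1-\lambda\gamma_0\|v'\|\|\rho\|-\lambda V_{1,j}^n-\lambda L$, which is non-negative under~\eqref{eq:CFL}, closing the upper bound and completing the induction.
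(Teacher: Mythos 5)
Your proof is correct and follows essentially the same route as the paper's: induction with the lower bounds obtained from the flux caps in Assumption~\ref{assumptions} together with the CFL condition, and the upper bounds obtained from Lemma~\ref{lem:estimates} combined with the monotonicity of $\coupling_1$ in $\rho_1$ and in the outgoing velocities (using $V_{o,j-1}^n\leq V_{o,j}^n$ for $j\leq-1$) and the Lipschitz bound $L\leq\|v\|$. The only difference is cosmetic — you telescope the coupling difference through $(\rho_{1,j-1}^n,V_{j}^n)$ whereas the paper passes through $(\rho_1^{\max},V_{j-1}^n)$ — and both yield the same effective contraction coefficient.
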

\begin{proof}
We start with the incoming road and the lower bound which can be obtained by applying \eqref{eq:cond_2for1to2}:
\begin{align*}
\rho_{1,j}^{n+1}&=\rho_{1,j}^n+\lambda \left(F_{1,j-1}^n-F_{1,j}^n\right)\\
&\geq \rho_{1,j}^n- \lambda \rho_{1,j}^n V_{1,j}^n - \lambda \coupling_1(\rho_{1,j}^n, V_{2,j}^n, V_{3,j}^n)\\
&\geq \rho_{1,j}^n (1-\lambda \left( V_{1,j}^n +\alpha_{1,2} V_{2,j}^n+ \alpha_{1,3} V_{3,j}^n\right)\\
&\geq 0.
\end{align*}
To obtain the upper bound we use the third and fourth property on $g_e$ of Assumption \ref{assumptions} and the estimates from Lemma \ref{lem:estimates}:
\begin{align} \nonumber
&\rho_{1,j}^{n+1}\\ \nonumber
&\leq \rho_{1,j}^n + \lambda \left( V_{1,j-1}^n \Ro -V_{1,j}^n \rho_{1,j}^n\right) +\lambda \left( \coupling_1(\Ro,V_{2,j-1}^n,V_{3,j-1}^n)- g_1(\rho_{1,j}^n, V_{2,j}^n, V_{3,j}^n) \right)\\ \label{eq:step2to1}
&\leq \rho_{1,j}^n + \lambda \left( \gamma_0 \| v'\|\| \rho \|+ V_{1,j}^n\right)(\Ro- \rho_{1,j}^n) +\lambda \left( \coupling_1(\Ro,V_{2,j}^n,V_{3,j}^n)- g_1(\rho_{1,j}^n, V_{2,j}^n, V_{3,j}^n) \right)\\ \label{eq:stepCFLdiff}
&\leq \rho_{1,j}^n + \lambda \left( \gamma_0 \| v'\|\| \rho \|+ V_{1,j}^n +L\right)(\Ro- \rho_{1,j}^n)\\ \nonumber
&\leq \rho_{1,j}^n + \lambda \left( \gamma_0 \| v'\|\| \rho \|+ V_{1,j}^n 2\| v\|\right)(\Ro- \rho_{1,j}^n)\\ \nonumber
&\leq \Ro.
\end{align}
For the outgoing roads, we have to consider the cell $j=0$ since for $j>0$ the maximum principle is given in~\cite{friedrich2018godunov}. Here, we use the conditions on the inflow which is positive and obtain with $e \in\{2,3\}$:
\begin{align*}
\rho_{e,0}^{n+1} \geq \rho_{e,0}^n-\lambda F_{e,0}
\geq \rho_{e,0}^n (\lambda-\| v\|)
\geq 0.
\end{align*}
Similarly with Lemma \ref{lem:estimates},
\begin{align*}
\rho_{e,0}^{n+1}&=\rho_{e,0}^n+\lambda(F_{e,-1}^n-F_{e,0}^n)\\
&\leq \rho_{e,0}^n+\lambda(\Rr V_{e,-1}^n-\rho_{e,0}^n V_{e,0}^n)\\
& \leq \rho_{e,0}^n+\lambda(\gamma_0\| v'\| \| \rho\| +\| v\|)(\Rr-\rho_{e,0}^n)\\
&\leq\Rr.\qedhere
\end{align*}
\end{proof} 
\begin{remark}
The proof for the 2-to-1 junction is completely analogous except that we additionally need to satisfy condition \eqref{eq:cond_2to1}.
This is necessary when proving the upper bound on the incoming roads.
Instead of \eqref{eq:step2to1} we have, e.g., for road $1$:
\[ \rho_{1,j}^{n+1} \leq \rho_{1,j}^n + \lambda \left( \gamma_0 \| v'\|\| \rho \|+ V_{1,j}^n\right)(\Ro- \rho_{1,j}^n) +\lambda \left( \coupling_1(\Ro,\rho_{2,-1}^n,V_{3,j}^n)- g_1(\rho_{1,j}^n,\rho_{2,-1}^n, V_{3,j}^n) \right).\]
As we assume \eqref{eq:cond_2to1} we can proceed as in \eqref{eq:stepCFLdiff} and apply the Lipschitz continuity of $\coupling_e$.
The rest of the proof remains the same.
\end{remark}
\begin{remark}
Note that the CFL condition \eqref{eq:CFL} can be further relaxed.
This is possible due to the properties of the kernel function $\wt$ and the definition of the velocities in \eqref{eq:velocity} and \eqref{eq:discretevelocity}, respectively.
Exemplary, for the 1-to-2 junction we can estimate $V_{1,j}^n+L$ in \eqref{eq:stepCFLdiff} by $\norm{v}$.
Hence, the relaxed CFL condition reads
\begin{align*}
    \lambda \leq \frac{1}{\gamma_0 \Vert v'\Vert \Vert \rho \Vert +\Vert v \Vert}.
\end{align*}
\end{remark}
In order to prove a BV bound in space we consider the following total variation:

\[\sum_{i\in\mathcal{I}} \sum_{j<-1} \abs{\rho_{i,j+1}^n-\rho_{i,j}^n}+
\underbrace{\sum_{o\in\mathcal{O}} \abs{\rho_{o,1}^n-\rho_{o,0}^n}}_{\leq 2N\norm{\rho}}+
\underbrace{\sum_{o\in\mathcal{O}}\sum_{i\in\mathcal{I}} \abs{\rho_{o,0}^n-\rho_{i,-1}}}_{\leq 2NM\norm{\rho}}+
\sum_{o\in\mathcal{O}} \sum_{j>0} \abs{\rho_{o,j+1}^n-\rho_{o,j}^n}.\]

As already displayed, two terms can be easily estimated by the total number of (incoming) roads and the norm of $\rho$.
For the other two terms it is possible to derive an estimate which includes an exponential constant multiplied by the total variation of the initial condition.
Hence, following \cite[Theorem 3.2]{friedrich2018godunov}, we are able to derive an upper bound for the last term. However,
to estimate the first term, we need to use a regularization of the limiter as all proposed couplings $\coupling_e$ are only weakly differentiable in $\rho_e$.
A suitable regularization of the minimum function is for instance given in \cite[eq. (4.9)]{chiarelloFriedrichGoatinGK}.
This enables us to follow the proof of \cite[Lemma 2]{chiarelloFriedrichGoatinGK} and bound the first term.

 Taking the maximum gives an estimate on the total variation of the form
\[ TV(\rho)\leq \exp(C(T))TV(\rho_0)+\text{const}.\]
Applying \cite[Theorem 3.3]{friedrich2018godunov} and similar steps as already described above we get a BV estimate in space and time such that with Helly's Theorem the convergence of a sub-sequence can be concluded.
Without mentioning all the details we aim to demonstrate that the collection of limiting functions $\rho_e^*$ are weak solutions in the sense of the Definition \ref{def:weaksolnetwork} and \ref{def:admisolnetwork}.
The first item of Definition \ref{def:weaksolnetwork} can be shown by using Lax-Wendroff types arguments, the calculations done in \cite{chiarelloFriedrichGoatinGK,friedrich2018godunov} as well as equations \eqref{eq:velocity} and \eqref{eq:discretevelocity}, respectively.
The calculation can be even simplified since we deal with weak solutions.
The second condition in Definition \ref{def:weaksolnetwork} holds by the BV estimates derived above and Helly's Theorem, the third condition by the definition of the coupling conditions.
The fourth condition can be also obtained by using Lax-Wendroff type arguments.

Now we turn to the conditions \ref{item:admis1} and \ref{item:admis2} in Definition \ref{def:admisolnetwork}.
Starting with \ref{item:admis1}, we see that the Examples \ref{ex:12:maxflow}--\ref{ex:21:prio} satisfy
\[\coupling_e(\{\rho_i(t,x)\}_{i\in\mathcal{I}},\{V_o(t,x)\}_{o\in\mathcal{O}})\leq\pe(t,x) \sum_{o=M+1}^{N+M} \alpha_{i,o} V_o(t,x)\]
if $\rho_i(t,x)\leq \rho_o^{\max}, \; \forall o\in\mathcal{O},\ i\in\mathcal{I}$.
Equality is only obtained in the Example \ref{ex:12:maxflow}, while in the other models further conditions have to be satisfied.
In contrast, the condition \ref{item:admis2} is satisfied for all models by construction, except Example \ref{ex:12:maxflow}.

Therefore, we can conclude that weak solutions in the sense of the Definitions \ref{def:weaksolnetwork} and \ref{def:admisolnetwork} exist.
Note that we do not consider the uniqueness of those solutions, since the standard techniques are not applicable in a straightforward way.

\section{Limit \texorpdfstring{$\ndt \to \infty$}{}}
As mentioned in the introduction nonlocal traffic flow models have been introduced to incorporate the challenges occurring in nowadays traffic such as autonomous cars.
The nonlocal range $\ndt$ can be therefore interpreted as a connection radius between cars, where the latter only need information about the downstream traffic. 
Apparently, in case of non-autonomous cars, the nonlocal range can be seen as the sight of a human driver.
The question arises what happens if autonomous cars would have perfect information about the downstream traffic. 
To treat this question from a theoretical point of view,
the nonlocal range should tend to infinity in the network setting  \eqref{eq:network_model} and \eqref{eq:network_flux}. 
For a similar traffic model on a single road, this feature has been already analyzed in \cite[Corollary 1.2]{ChiarelloGoatin}.
Therein, the model tends to a linear transport equation with maximum velocity.
From the modeling perspective this result is kind of intuitive:
All drivers know exactly what happens in front of them on the whole road such that they can react in advance.
So they are able to keep the speed at the maximum velocity regardless of the current traffic situation.
However, a network model includes different types of junctions and different maximum densities or speed functions, respectively, leading to a non-intuitive behavior of drivers. 

For our considerations, we  start with some general estimates on the nonlocal velocities and discuss the 1-to-1 junction model in more detail (since all necessary estimates have been already established in \cite{chiarelloFriedrichGoatinGK}).
Note that we now use the notation $V_e^\ndt(t,x)$ for the velocities to emphasize their dependence on the nonlocal range $\ndt$.

\begin{lemma}\label{lem:velinf}
Let $\Omega_\mathcal{I}=(0,\infty)\times(-\infty,0)$ and $\Omega_\mathcal{O}=(0,\infty)\times(0,\infty)$. Define $\Omega_e=\Omega_\mathcal{I}$ if $e \in \mathcal{I}$ and $\Omega_e=\Omega_\mathcal{O}$ if $e \in \mathcal{O}$. In addition, let $\rho_e\in L^\infty( \Omega_e)$ be a collection of functions which are either of compact support on $\Omega_e$ or identical to zero and let $K_e$ be a compact subset of $\Omega_e$. Then, we obtain for the velocities \eqref{eq:velocity} the following:
\begin{equation}\label{eq:LDCVeta}
\begin{aligned}
&\lim_{\ndt\to\infty} \iint_{K_e} |V^\ndt_e(t,x)|=0,\quad e\in\mathcal{I},\qquad \lim_{\ndt\to\infty} \iint_{K_e} |V^\ndt_e(t,x)-v_e(0)|=0,\quad e\in\mathcal{O},\\
&\lim_{\ndt\to\infty} \iint_{K_i} |V^\ndt_e(t,x)-v_e(0)|=0,\quad e\in\mathcal{O}, i\in\mathcal{I}.
\end{aligned}
\end{equation}
\end{lemma}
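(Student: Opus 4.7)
The plan is to obtain pointwise bounds on $V_e^\ndt(t,x)$ (or on $V_e^\ndt(t,x)-v_e(0)$) that decay uniformly on the compact integration set as $\ndt\to\infty$, and then integrate over a bounded domain to conclude. The two key ingredients are: (i) since $\wt$ is non-increasing with $\wt(0)\to 0$ as $\ndt\to\infty$, we have $\wt(z)\leq \wt(0)$ for every $z\in[0,\ndt]$; and (ii) each $\rho_e$ either vanishes identically or has compact support in $\Omega_e$, so $v_e(\rho_e(t,\cdot))-v_e(0)$ is supported in a bounded set of $y$, uniformly in $t$. The normalization $\int_0^\ndt \wt(z)\,dz=1$ is the tool used to insert the subtraction by $v_e(0)$.

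For $e\in\mathcal{I}$ and $(t,x)\in K_e\subset\Omega_\mathcal{I}$, once $\ndt>-x$ the integration domain in \eqref{eq:velocity} reduces to $[x,0]$. Combining $|v_e(\rho_e)|\leq \|v_e\|_\infty$ with $\wt(y-x)\leq \wt(0)$ gives $|V_e^\ndt(t,x)|\leq \|v_e\|_\infty\,\wt(0)\,|x|$, which is uniformly small on $K_e$; integrating over the bounded $K_e$ yields the first assertion.

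For $e\in\mathcal{O}$ and $(t,x)\in K_e\subset\Omega_\mathcal{O}$, the integration domain of $V_e^\ndt$ is $[x,x+\ndt]$ and the substitution $z=y-x$ gives $\int_x^{x+\ndt}\wt(y-x)\,dy=1$. I therefore write
\[
V_e^\ndt(t,x)-v_e(0)=\int_x^{x+\ndt}\left[v_e(\rho_e(t,y))-v_e(0)\right]\wt(y-x)\,dy.
\]
If $\rho_e\equiv 0$ this vanishes. Otherwise compact support confines the nonzero part of the integrand to $y\in[0,L]$ for some fixed $L$, and the bound $\wt\leq\wt(0)$ yields $|V_e^\ndt(t,x)-v_e(0)|\leq 2\|v_e\|_\infty\,\wt(0)\,L$ uniformly on $K_e$, which again suffices after integration.

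The third statement, with $e\in\mathcal{O}$ but integration taken over $K_i\subset\Omega_\mathcal{I}$, is the most delicate and is where the main obstacle lies: for $x<0$ and $\ndt>-x$ the domain of $V_e^\ndt$ is only $[0,x+\ndt]$, so the weight no longer integrates to unity but to $1-\int_0^{-x}\wt(z)\,dz$. I would accordingly split
\[
V_e^\ndt(t,x)-v_e(0)=\int_0^{x+\ndt}\left[v_e(\rho_e(t,y))-v_e(0)\right]\wt(y-x)\,dy-v_e(0)\int_0^{-x}\wt(z)\,dz,
\]
and bound each summand as above: the first by $2\|v_e\|_\infty\,\wt(0)\,L$ using the compact support of $\rho_e$, the second by $|v_e(0)|\,\wt(0)\,|x|$ using monotonicity of $\wt$. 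Both pieces are $O(\wt(0))$ uniformly on the compact $K_i$, and integration concludes the argument.
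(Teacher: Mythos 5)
Your argument is correct, and it rests on exactly the same two pillars as the paper's proof: the monotonicity bound $\wt(y-x)\leq\wt(0)$ together with $\lim_{\ndt\to\infty}\wt(0)=0$, and the fact that the kernel's unit mass is what reproduces $v_e(0)$ on outgoing roads. The execution differs in two ways that are worth noting. First, where the paper splits the integration domain into three pieces around the support $[a_e(t),b_e(t)]$, establishes \emph{pointwise} convergence for each fixed $(t,x)$, and then invokes dominated convergence (with dominating function $v_e(0)$) to pass to the integral over $K_e$, you instead insert $v_e(0)$ via the normalization $\int_0^\ndt\wt=1$ and obtain bounds of the form $C\,\wt(0)$ that are \emph{uniform} on the compact set, so that integrating over $K_e$ is immediate and no convergence theorem is needed; this also gives an explicit rate $O(\wt(0))$ that the paper's argument does not state. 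Second, for incoming roads your estimate $|V_e^\ndt(t,x)|\leq\|v_e\|_\infty\,\wt(0)\,|x|$ (valid once $\ndt>\sup_{K_e}|x|$) does not use the compact-support hypothesis on $\rho_e$ at all, whereas the paper treats the cases $\rho_e\equiv 0$ and $\rho_e$ compactly supported separately even there. Your handling of the third limit, with the defect term $v_e(0)\int_0^{-x}\wt(z)\,dz\leq v_e(0)\,\wt(0)\,|x|$, is a clean quantitative version of the paper's remark that one ``sets $x=0$ in the lower bounds of the integrals''; the algebra of your decomposition checks out. The only hypothesis you use implicitly and should perhaps state is that compact support of $\rho_e$ on $\Omega_e\subset\mathbb{R}^2$ yields a $y$-support bounded uniformly in $t$, which is indeed what the lemma's wording provides.
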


\begin{proof}
We start with $e\in\mathcal{I}$.
If $\rho_e(t,\cdot)\equiv 0$ we directly obtain 
\begin{align*}
V_e^\ndt(t,x)= \int_x^{\min\{x+\ndt,0\}} \wt(y-x)v_e(0)dy\leq \wt(0) v_e(0) \abs{\min\{x+\ndt,0\}-x\}}
\end{align*}
which goes to zero for $\ndt \to\infty$ due to \eqref{eq:hypotheses}.
Now let us consider the case $\rho_e(t,\cdot)$ being of compact support.
Without loss of generality, we assume that the support is on $[a_e(t),b_e(t)]$ with $a_e(t)<b_e(t)\leq 0$.
Here, we have
\begin{align*}
V_e^\ndt(t,x)=&\int_{\max\{x,a_e(t)\}}^{\min\{b_e(t),x+\ndt\}} \wt (y-x) v_e(\pe(t,y))dy+\int_{\min\{x,a_e(t)\}}^{\min\{a_e(t),x+\ndt\}} \wt (y-x) v_e(0)dy+\\
&\int_{\max\{x,b_e(t)\}}^{\min\{\max\{x+\ndt,b_e(t)\},0\}} \wt (y-x) v_e(0)dy\\
\stackrel{\ndt\to\infty}{\to}&\int_{\max\{x,a_e(t)\}}^{b_e(t)} \wt (y-x) v_e(\pe(t,y))dy+\int_{\min\{x,a_e(t)\}}^{a_e(t)} \wt (y-x) v_e(0)dy+\\
&\int_{\max\{x,b_e(t)\}}^{0} \wt (y-x) v_e(0)dy\\
\leq& \wt(0) v_e(0)\underbrace{\left(\abs{b_e(t)-\max\{x,a_e(t)\}}+\abs{a_e(t)-\min\{x,a_e(t)\}}+\abs{\max\{x,b_e(t)\}}\right)}_{<\infty}\\
\stackrel{\ndt\to\infty}{\to} &0.
\end{align*} 
As we consider the pointwise limit, $t$ and $x$ are fixed and so are $a_e(t)$ and $b_e(t)$. 
Hence, the intervals in the last estimate are all finite and with $\lim_{\ndt\to\infty}\wt(0)=0$, cf. \eqref{eq:hypotheses}, we obtain that the upper bound on $V_e^\ndt(t,x)$ goes to zero.
As  we also have $V_e^\ndt(t,x)\geq 0$, we can conclude that $V_e^\ndt(t,x)\to 0$ pointwise for $\ndt \to \infty$ on $\Omega_{\mathcal{I}}$.

If we consider $e\in\mathcal{O}$, $x>0$ and $0\leq a_e(t)<b_e(t)$, we get
\begin{align*}
V_e^\ndt(t,x)=&\int_{\max\{x,a_e(t)\}}^{\min\{b_e(t),x+\ndt\}} \wt (y-x) v_e(\pe(t,y))dy+
\int_{\min\{x,a_e(t)\}}^{\min\{a_e(t),x+\ndt\}} \wt (y-x) v_e(0)dy+\\
&\int_{\max\{x,b_e(t)\}}^{\max\{x+\ndt,b_e(t)\}} \wt (y-x) v_e(0)dy\\
\stackrel{\ndt\to\infty}{=}&\int_{\max\{x,a_e(t)\}}^{b_e(t)} \wt (y-x) v_e(\pe(t,y))dy+
\int_{\min\{x,a_e(t)\}}^{a_e(t)} \wt (y-x) v_e(0)dy+\\
&\int_{\max\{x,b_e(t)\}}^{\infty} \wt (y-x) v_e(0)dy\\
\to&\ v_e(0).
\end{align*}
Since the first two terms can be again estimated from above by
\[\wt(0)v_e(0) \left(\abs{\max\{x,a_e(t)\}-b_e(t)}+\abs{\min\{x,a_e(t)\}-a_e(t)}\right)\to 0\]
and for the last term we can use that $\int_0^\infty \wt(y) dy=1$ due to \eqref{eq:hypotheses}, we end up with
\begin{align*}
1\geq \int_{\max\{x,b(t)\}}^{\infty} \wt (y-x)dy= 1-\int_x^{\max\{x,b(t)\}} \wt(y-x)dy\geq 1-\wt(0)\abs{b(t)-x}\to 1
\end{align*}
for $\ndt \to \infty$.
If $\rho_e(t,\cdot)\equiv 0$, we proceed as for the last term.
Hence, we have $V_e^\ndt(t,x)\to v_e(0)$ pointwise for $e\in\mathcal{O}$ on $\Omega_\mathcal{O}$.
In addition, if we consider $x\in(-\ndt,0)$ the above calculations for $V_e^\ndt(t,x)$ and $e\in\mathcal{O}$ are completely similar by setting $x=0$ in the lower bounds of the integrals.
Note that the case $x\leq -\ndt$ disappears in the pointwise limit by choosing $\ndt$ large enough for a fixed $x$.
So we also obtain $V_e^\ndt(t,x)\to v_e(0)$ pointwise for $e\in\mathcal{O}$ on $\Omega_\mathcal{I}$.

In addition, we have $V_e^\ndt(t,x)\leq v_e(0)\ \forall x,\ t,\ e\in E $ with $v_e(0)$ being an integrable function on each compact subset of $\Omega_e$.
Hence, Lebesgue's dominated convergence theorem yields the assertion.
\end{proof}

Next, we prove the convergence for $\ndt\to\infty$ for the 1-to-1 junction model. 
Note that for simplicity we write $\rho$ instead of $\rho_1$ and $\rho_2$, since in the case of a 1-to-1 junction the solution is uniquely given on whole $\mathbb{R}$.

\begin{proposition}\label{prop:1to1inf}
Let the hypotheses \eqref{eq:hypotheses} hold and let $\rho_0\in\textbf{BV}(\mathbb{R},I)$.
Then, the solution $\rho^\ndt$ of the 1-to-1 junction model given by \eqref{eq:network_model}, \eqref{eq:network_flux} with \eqref{eq:g} converges for $\ndt\to\infty$ to the unique entropy solution of the local problem
\begin{equation}\label{eq:ADRmodel}
\begin{aligned}
\partial_t \rho +\partial_x \left(\min\{\rho v_2(0),\Rt v_2(0)\}\right)&=0\\
\rho(0,x)&=\rho_0(x).
\end{aligned}
\end{equation}
\end{proposition}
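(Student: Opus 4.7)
My approach combines a compactness argument for the nonlocal scheme with the velocity convergence of Lemma \ref{lem:velinf} to pass to the limit in the weak formulation, and then upgrades the weak limit to the Kruzkov entropy solution via a discrete entropy inequality. From the uniform $L^\infty\cap BV$ estimates derived in \cite{chiarelloFriedrichGoatinGK}, the family $\{\rho^\ndt\}_{\ndt>0}$ is equibounded in $L^\infty(0,T;BV(\mathbb{R}))$ independently of $\ndt$ and equicontinuous in time with values in $L^1_{loc}$. Helly's theorem then yields a subsequence (still denoted $\rho^\ndt$) converging a.e.\ and in $L^1_{loc}(\mathbb{R}_+\times\mathbb{R})$ to some $\rho^*\in L^\infty\cap BV$, and the maximum principle on the outgoing road passes to the limit to give $\rho^*(t,x)\leq \Rt$ for a.e.\ $x>0$.

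Next, I would identify the flux by applying Lemma \ref{lem:velinf}: $V_1^\ndt\to 0$ on $\Omega_\mathcal{I}$, and $V_2^\ndt\to v_2(0)$ both on $\Omega_\mathcal{O}$ and on every compact subset of $\Omega_\mathcal{I}$ (third assertion of \eqref{eq:LDCVeta}). Combining this with the strong $L^1_{loc}$ convergence of $\rho^\ndt$ and the continuity of $v_e$ and $\rho\mapsto\min\{\rho,\Rt\}$, the nonlocal fluxes with coupling \eqref{eq:g} satisfy
\begin{align*}
f_1^\ndt = \rho_1^\ndt V_1^\ndt+\min\{\rho_1^\ndt,\Rt\}V_2^\ndt &\longrightarrow \min\{\rho^*,\Rt\}\,v_2(0), \\
f_2^\ndt = \rho_2^\ndt V_2^\ndt &\longrightarrow \rho^*\, v_2(0),
\end{align*}
in $L^1_{loc}$. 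Since $\rho^*\leq \Rt$ for $x>0$, both limits coincide with $F(\rho^*):=\min\{\rho^* v_2(0),\Rt v_2(0)\}$, and passing to the limit in items (iii) and (iv) of Definition \ref{def:weaksolnetwork} shows that $\rho^*$ is a distributional solution of $\partial_t\rho+\partial_x F(\rho)=0$ on $\mathbb{R}_+\times\mathbb{R}$ with initial datum $\rho_0$.

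To upgrade $\rho^*$ to the Kruzkov entropy solution I would pass to the limit in a discrete Kruzkov inequality satisfied by the monotone upwind scheme \eqref{eq:scheme} under the CFL condition \eqref{eq:CFL}. It takes the form
\[|\rho_{e,j}^{n+1}-k|-|\rho_{e,j}^n-k|+\lambda(\Phi_{e,j}^n-\Phi_{e,j-1}^n)\leq \lambda\, R_{e,j}^n,\]
where $\Phi$ is a Kruzkov numerical entropy flux and $R_{e,j}^n$ is an error term controlled by the velocity differences $|V_{e,j-1}^n-V_{e,j}^n|$, themselves bounded in Lemma \ref{lem:estimates} by $\gamma_0\|v'\|(\Rr-\rho_{e,j}^n)$. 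Because $\gamma_0\leq \wt(0)\Dx$ and $\wt(0)\to 0$ as $\ndt\to\infty$ by Assumption \ref{ass:vandkernel}, sending first $\Dx\to 0$ (at fixed $\ndt$) and then $\ndt\to\infty$ makes the residual vanish against any test function, and Kruzkov's entropy inequality for $\rho^*$ against every constant $k\in[0,\max\{\Ro,\Rt\}]$ follows. Uniqueness of the Kruzkov entropy solution of \eqref{eq:ADRmodel} then forces the whole family $\rho^\ndt$ to converge.

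The main obstacle is the entropy step: at the cells adjacent to $x=0$ the coupling term $\min\{\rho_1,\Rt\}V_2^\ndt$ is not a standard three-point upwind flux, and deriving a usable cell entropy inequality there requires exploiting the monotonicity of $\rho_1\mapsto \min\{\rho_1,\Rt\}$ and carefully absorbing the interface-generated error using the quantitative velocity estimates of Lemma \ref{lem:estimates} together with the decay $\wt(0)\to 0$. An alternative would be to by-pass the discrete argument entirely and perform a Kruzkov doubling of variables directly on the nonlocal solutions, but this still ultimately relies on the same velocity-difference estimate to close.
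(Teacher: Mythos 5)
Your overall architecture (uniform $BV$ bounds plus Helly for compactness, Lemma \ref{lem:velinf} to identify the limiting velocities, then an entropy inequality to pin down the limit) matches the paper's, but the step you yourself flag as ``the main obstacle'' --- obtaining a usable Kruzkov inequality across the junction --- is exactly the step the paper does \emph{not} leave open, and your two proposed ways around it are not carried out. The paper's resolution is to avoid the discrete route entirely: it first rewrites the two-road problem as a single equation on $\mathbb{R}$, $\partial_t\rho+\partial_x(\rho V_1+\coupling(\rho,V_2))=0$ with $\tilde\coupling(\rho)=\min\{\rho,\Rt\}$, and then starts from the \emph{continuous} Kruzkov-type entropy inequality for this nonlocal 1-to-1 model, which is already established (uniformly in $\ndt$) in \cite{chiarelloFriedrichGoatinGK}. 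From there the limit is a direct computation: add and subtract $v_2(0)$ in the flux term, use Lemma \ref{lem:velinf} to kill $\iint|\rho-\kappa|V_1\phi_x$ and $\iint(\tilde\coupling(\rho)-\tilde\coupling(\kappa))(V_2-v_2(0))\phi_x$, and bound the remaining source terms $\sgn(\rho-\kappa)\kappa\,\partial_xV_1\phi$ and $\sgn(\rho-\kappa)\tilde\coupling(\kappa)\,\partial_xV_2\phi$ by the pointwise estimate $|\partial_xV_e|\le 4\|v\|\wt(0)\to0$. Note that this last estimate is the continuous analogue of the residual you were trying to control at the discrete level; it is what makes the source terms vanish, and it requires $\ndt\to\infty$, not $\Dx\to0$ --- after $\Dx\to0$ at fixed $\ndt$ your residual sums to $O(\wt(0))$, not to zero. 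Your plan to send $\Dx\to0$ first and then $\ndt\to\infty$ would, if completed, simply reproduce the continuous nonlocal entropy inequality and then the paper's argument, so the honest assessment is that your proof is incomplete precisely where the paper's begins.

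Two smaller points. First, your flux-identification step (showing $\rho^*$ is a distributional solution of \eqref{eq:ADRmodel}) is correct but redundant once the entropy inequality holds for all $\kappa$; and the claim that $\rho^*\le\Rt$ for $x>0$ is not needed to identify the limiting flux as $\min\{\rho^*v_2(0),\Rt v_2(0)\}$ on the incoming side, where the maximum principle only gives $\rho^*\le\Ro$. Second, Lemma \ref{lem:velinf} requires the densities to be compactly supported; the paper secures this by invoking compact support of the initial datum together with finite propagation speed (bounded by $\|v\|$) before applying the lemma, whereas you apply it without checking this hypothesis. Also, Lemma \ref{lem:estimates} gives only a one-sided bound on $V^n_{e,j-1}-V^n_{e,j}$, not the two-sided bound your residual estimate implicitly uses, though a two-sided $O(\gamma_0)$ bound is easy to derive.
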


\begin{proof}
We first note that the 1-to-1 model can be rewritten by defining $\rho(t,x)=\rho_1(t,x)$ if $x<0$ and $\rho(t,x)=\rho_2(t,x)$ if $x>0$ as
\[\partial \rho(t,x)+\partial_x \left(\rho(t,x)V_1(t,x)+g(\rho(t,x),V_2(t,x))\right)=0,\]
which simplifies the notation in the rest of the proof, see also \cite{chiarelloFriedrichGoatinGK}.

The existence of solutions for $\ndt \to \infty$ is given since the BV estimates and the maximum principle in \cite{chiarelloFriedrichGoatinGK} are uniform as $\ndt\to\infty$ such that Helly's Theorem yields up to a subsequence the convergence of the solution in the $L^1_{loc}$ norm.

We start from the entropy inequality and add and subtract $v_2(0)$ at the right place, such that for a fixed  $\kappa\in\mathbb{R}$ and $\phi\in C_0^1([0,\infty)\times \mathbb{R};\mathbb{R}^+)$, we obtain
\begin{align*}
0\leq&  \int_0^\infty \int_{-\infty}^\infty (|\rho-\kappa|\phi_t +|\rho-\kappa| V_1\phi_x +\sgn(\rho-\kappa)(\tilde \coupling(\rho)-\tilde \coupling(\kappa))(V_2\pm v_2(0))\phi_x\\
&-\sgn(\rho-\kappa)\kappa \partial_x V_1\phi-\sgn(\rho-\kappa)\tilde \coupling(\kappa) \partial_x V_2\phi)(t,x)dxdt+\int_{-\infty}^{\infty} |\rho-\kappa|\phi(x,0)dx,
\end{align*}
where $\tilde \coupling$ is defined as $\tilde \coupling(\rho)=\min\{\rho,\Rt\}$. 
First, note that due to the compact support of the initial condition and the finite speed of the waves, which are bounded by $\| v\|$, the solution $\rho(t,x)$ is also of compact support such that we can apply Lemma \ref{lem:velinf} in the following.
Additionally, as the test functions are of compact support, there exist $T>0$ and $R>0$ such that $\phi(t,x)=0$ for $\abs{x}>R$ or $t>T$.
Using the latter and Lemma \ref{lem:velinf}, we obtain for $\ndt\to\infty$:
\begin{align*}
\int_0^\infty \int_{-\infty}^\infty|\rho-\kappa| V_1\phi_x dxdt &\leq \left(\Vert \rho\Vert+\abs{\kappa}\right) \Vert \phi_x\Vert \int_0^T \int_{-R}^R|V_1| dx dt \to 0\\
\int_0^\infty \int_{-\infty}^\infty \sgn(\rho-\kappa)(\tilde \coupling(\rho)-\tilde \coupling(\kappa))(V_2- v_2(0))\phi_x dxdt &\leq 2\Vert \rho\Vert \Vert \phi_x\Vert \int_0^T \int_{-R}^R|V_2-v_2(0)| dx dt \\
&\to 0.
\end{align*}
Again, thanks to the compactness of the test function and  condition \eqref{eq:hypotheses} of the kernel, we have
\begin{align*}
\int_0^\infty \int_{-\infty}^\infty \sgn(\rho-\kappa)\kappa \partial_x V_1\phi dxdt &\leq 16\kappa\Vert \phi\Vert\|v\| T R \wt(0) \to 0,\\
\int_0^\infty \int_{-\infty}^\infty \sgn(\rho-\kappa)\tilde \coupling(\kappa) \partial_x V_2\phi) dxdt &\leq 16\norm{\rho}\kappa\Vert \phi\Vert\|v\| T R \wt(0) \to 0.
\end{align*}
Here, we use that
\begin{align*}
\left| \partial_x V_1\right|
\leq& \left| \int_{\min\{x,0\}}^{\min\{x+\ndt,0\}} v_1(\rho(t,y))\wt'(y-x)dy\right|+v_1(\rho(t,\min\{x,0\}))\wt(0)\\
&+v_1(\rho(t,\min\{x+\ndt,0\}))\wt(0)\\
\leq& \|v\| \left|\int_{\min\{x,0\}}^{\min\{x+\ndt,0\}} \wt'(y-x)dy\right| + 2 \| v\| \wt(0)\\
\leq& 4 \| v\| \wt(0),
\end{align*}
and analogously we get $\left| \partial_x V_2\right|\leq 4 \| v\| \wt(0)$.
We are left with 
\begin{align*}
0\leq&  \int_0^\infty \int_{-\infty}^\infty (|\rho-\kappa|\phi_t +\sgn(\rho-\kappa)(g(\rho)-g(\kappa))(v_2(0))\phi_x)(t,x)dxdt+\int_{-\infty}^{\infty} |\rho-\kappa|\phi(x,0)dx,
\end{align*}
which is the entropy inequality of the corresponding local model \eqref{eq:ADRmodel}.
\end{proof}

Some remarks are in order.
\begin{remark}
The proof can be easily adapted to the model without  considering any junctions, i.e.,
\begin{align*}
\partial_t \rho +\partial_x \left(V^\ndt(t,x)\rho\right)&=0\\
\rho(0,x)&=\rho_0(x),
\end{align*}
with $V^\ndt(t,x)=\int_x^{x+\ndt} v(\rho(t,y))\wt(y-x)dy$.
As already mentioned for a similar model a convergence result for $\ndt \to \infty$ has already been discovered in \cite[Corollary 1.2]{ChiarelloGoatin}. Using the proof above we obtain the convergence to the unique weak entropy inequality of the linear transport equation:
\begin{align*}
\partial_t \rho +\partial_x \left(v(0)\rho\right)&=0\\
\rho(0,x)&=\rho_0(x),
\end{align*} 
i.e.,
\[0\leq \int_0^\infty \int_{-\infty}^\infty (|\rho-\kappa|\phi_t +|\rho-\kappa| v(0)\phi_x dx dt +\int_{-\infty}^{\infty} |\rho-\kappa|\phi(x,0)dx.\]
The linear transport is a situation on the road where no traffic jams occur since even if the traffic is at maximum density all cars drive at the fasted velocity possible.
Even though this sounds more like an idealistic property, the model captures the limiting behavior for an infinite interaction range resulting in an ``optimized'' traffic.
\end{remark}  

\begin{remark}
The model \eqref{eq:ADRmodel} has been also recovered in \cite{ArmbrusterDegondRinghofer2006} in the context of production with velocity $v_2(0)$ and capacity $v_2(0)\Rt$.
Transferring the idea of a production model to traffic flow, we observe that all cars want to move at a constant speed $v_2(0)$ as long as there is enough capacity, which is mainly determined by the maximum density of the second road $\Rt$.

Let us explain the model dynamics with the help of Riemann initial data, i.e.,
\[\rho_0(x)=\begin{cases}
\rho_L,&\text{ if }x<0,\\
\rho_R,&\text{ if }x>0,
\end{cases}
\]
with $\rho_L\in[0,\Ro]$ and $\rho_R\in[0,\Rt]$.
We recall that the change in the velocity is located at $x=0$.
There are only two possible solutions:
\begin{itemize}
\item if $\rho_L\leq\Rt$, the solution is given by a linear transport with velocity $v_2(0)$,
\item if $\rho_L>\Rt$, we have a rarefaction wave with the density equal to $\Rt$ as an intermediate state.
\end{itemize}
Both cases are illustrated in Figure \ref{fig:RP}.

\setlength{\fwidth}{0.8\textwidth}
\begin{figure}[ht]
\centering
\begin{tikzpicture}

\begin{axis}[%
width=0.4\fwidth,
height=0.4\fwidth,
at={(0\fwidth,0\fwidth)},
scale only axis,
xmin=-1,
xmax=1,
xtick={0},
xticklabels={$0$},
xlabel style={font=\color{white!15!black}},
xlabel={$x$},
ymin=0.4,
ymax=1.1,
ytick={0.5,0.75,1},
yticklabels={$0.5$,$0.75$,$1$},
ylabel style={font=\color{white!15!black}},
ylabel={$\rho$},
axis background/.style={fill=white},
title style={font=\bfseries},
title={$\Rt=1$},
legend style={at={(0.9,1.1)}, anchor=south west, legend cell align=left, align=left, draw=white!15!black}
]
\addplot [color=black, line width=1.5pt]
  table[row sep=crcr]{%
-1	1\\
0	1\\
0	0.5\\
1	0.5\\
};
\addlegendentry{$\rho(0,x)$}

\addplot [color=black, dotted, line width=1.5pt]
  table[row sep=crcr]{%
-1	1\\
0.5	1\\
0.5	0.5\\
1	0.5\\
};
\addlegendentry{$\rho(t,x)\ t>0$}

\end{axis}

\begin{axis}[%
width=0.4\fwidth,
height=0.4\fwidth,
at={(0.55\fwidth,0\fwidth)},
scale only axis,
xmin=-1,
xmax=1,
xtick={0},
xticklabels={$0$},
xlabel style={font=\color{white!15!black}},
xlabel={$x$},
ymin=0.4,
ymax=1.1,
ytick={0.5,0.75,1},
yticklabels={$0.5$,$0.75$,$1$},
ylabel style={font=\color{white!15!black}},
ylabel={$\rho$},
title style={font=\bfseries},
title={$\Rt=0.75$},
axis background/.style={fill=white}
]
\addplot [color=black, line width=1.5pt]
  table[row sep=crcr]{%
-1	1\\
0	1\\
0	0.5\\
1	0.5\\
};

\addplot [color=black, dotted, line width=1.5pt]
  table[row sep=crcr]{%
-1	1\\
0	1\\
0 0.75\\
0.25 0.75\\
0.25	0.5\\
1	0.5\\
};

\end{axis}

\end{tikzpicture}
\caption{Solution of the two Riemann problems for \eqref{eq:ADRmodel}.}\label{fig:RP}
\end{figure}
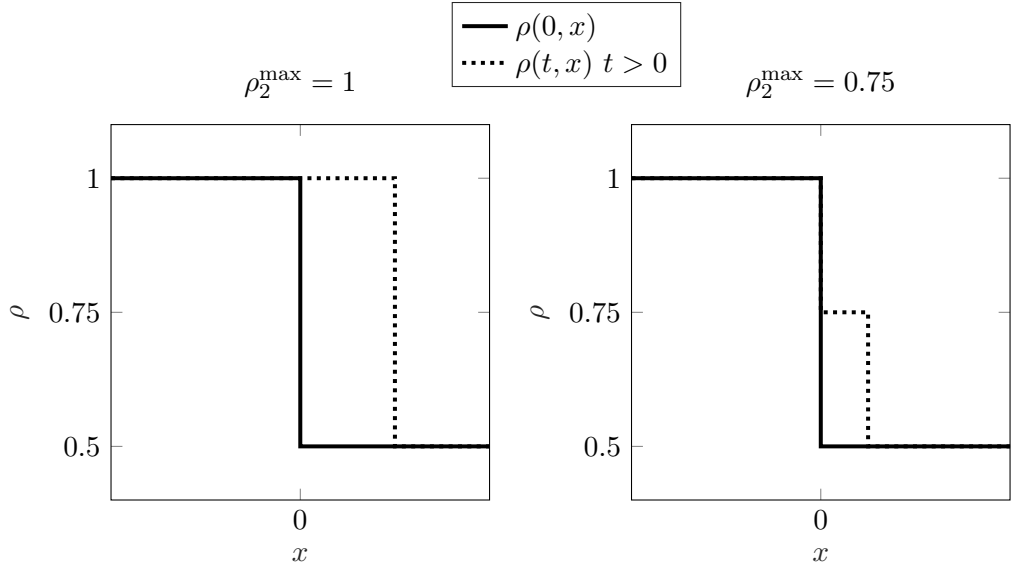

In the first case, i.e., the linear transport, no traffic jams occurs and traffic is transported at a constant velocity.
As we allow the drivers to accelerate as soon as they are aware of the junction, which they always are in the limit $\ndt \to\infty$, they do not respect the given maximum velocity on the first road.
They always drive with the maximum velocity of the second road which might be higher or lower than the one of the first road.
So the model ``optimizes''  the traffic but might not care about velocity restrictions.
The Riemann problems producing a rarefaction wave induce a congestion at the end of the first road and beginning of the second road due to the fact that the initial traffic on the first road cannot pass completely onto the second road. 
Again, the model solves this problem in a reasonable way in terms of traffic congestion, i.e., as much flow as possible is sent through the junction at a constant speed.
\end{remark}
Let us now turn to the more general nonlocal network setting.
Here, we need to satisfy the local version of Definition \ref{def:weaksolnetwork}, i.e., \cite[Definition 4.2.4]{garavellohanpiccoli2016book}.
As aforementioned, these are basically the items 1-3 of Definition \ref{def:weaksolnetwork}, even now $\rho_e$ for $e\in\{1,\dots,M\}$ has to be a weak solution on $(-\infty,0)$ instead of $(-\infty,-\ndt)$. Obviously, the Rankine-Hugoniot condition is also satisfied for $\ndt \to \infty$.
Since the BV estimates established in \cite[Theorem 3.2]{friedrich2018godunov} and \cite[Lemma 2]{chiarelloFriedrichGoatinGK} also hold for $\ndt \to \infty$ thanks to $\lim_{\ndt\to\infty}\wt(0)=0$, the second item is clear.
The calculations are similar to above.
In particular, for the outgoing roads in the examples \ref{ex:12:maxflow}--\ref{ex:21:prio} the dynamics in the limit are described by
\[\partial_t \pe +\partial_x\left( \pe v_e(0) \right)=0.\]
For the incoming roads the convergence of the coupling $\coupling_e$ plays the most important role.
By obtaining the pointwise limit of $\coupling_e$ we can proceed similarly as in the proof of Proposition \ref{prop:1to1inf} also using Lemma \ref{lem:velinf}. Therefore, we obtain the following limit models for $x<0$:
\begin{itemize}
\item Example \ref{ex:12:maxflow}:
\[\partial_t \po +\partial_x\left(\min\{\alpha_{1,2}\po(t,x),\Rt\}v_2(0)+\min\{\alpha_{1,3}\po(t,x),\Rtr\}v_3(0)\right)=0,\]
\item Example \ref{ex:12:dist}:
\[\partial_t \po +\partial_x\left(\min\{\po(t,x)(\alpha_{1,2}v_2(0)+\alpha_{1,3}v_3(0)),\frac{\Rt v_2(0)}{\alpha_{1,2}},\frac{\Rtr V_3(0)}{\alpha_{1,3}}\}\right)=0,\]
\item Example \ref{ex:21:maxflow}, $e\in\{1,2\}$:
\[\partial_t \pe +\partial_x\left(\min\{\pe(t,x),\max\{q_{e,3}\Rtr,\Rtr-\rho_{e^-}(t,0-)\}\}v_3(0)\right)=0,\]
\item Example \ref{ex:21:prio}, $e\in\{1,2\}$:
\[\partial_t \pe +\partial_x \left(\min\{\pe(t,x),q_{e,3}\Rtr,q_{e,3}/q_{e^-,3}\rho_{e^-}(t,0-)\}v_3(0)\right)=0.\]
\end{itemize}
By construction all models are similar to a production type model and inherit the property that the flow moving at a constant speed is restricted by a capacity.

We note that the computations for the Example \ref{ex:12:dist} are slightly more involved since the nonlocal velocities are inside the minimum.

\section{Numerical simulations}
In the following we consider a more complex network to demonstrate the properties of the proposed models.
The network under consideration has a diamond structure and consists of nine roads and six vertices, see Figure \ref{fig:diamond}.

\setlength{\fwidth}{0.8\textwidth}
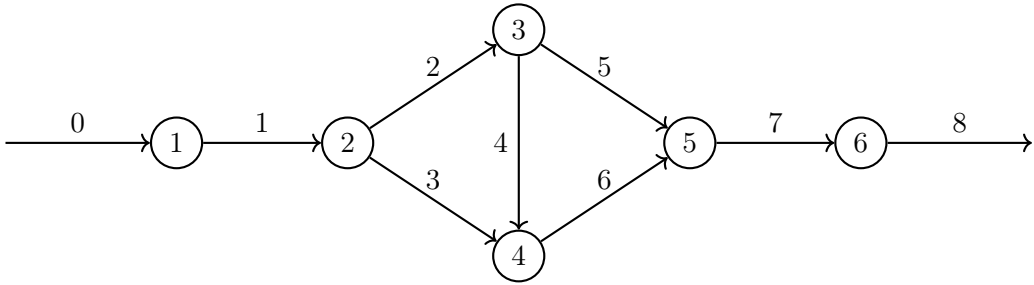
\begin{figure}[ht]
\centering
\begin{tikzpicture}[scale=1.5]

\node[circle,draw, thick]  (1) at (0, 0) {1};
\node[circle,draw, thick]  (2) at (1.5, 0) {2};
\node[circle,draw, thick]  (3) at (3, 1) {3};
\node[circle,draw, thick]  (4) at (3, -1) {4};
\node[circle,draw, thick]  (5) at (4.5,0) {5};
\node[circle,draw, thick]  (6) at (6,0) {6};
\draw [->,thick] (-1.5,0) -- (1) node[midway, above]{0};
\draw [->,thick] (1) -- (2) node[midway, above]{1};
\draw [->,thick] (2) -- (3) node[midway, above]{2};
\draw [->,thick] (2) -- (4) node[midway, above]{3};
\draw [->,thick] (6) -- (7.5,0) node[midway, above]{8};
\draw [->,thick] (3) -- (4) node[midway, left]{4};
\draw [->,thick] (3) -- (5) node[midway, above]{5};
\draw [->,thick] (4) -- (5) node[midway, above]{6};
\draw [->,thick] (5) -- (6) node[midway, above]{7};

\end{tikzpicture}
\caption{Network structure}\label{fig:diamond}
\end{figure}
As we have not prescribed inflow and outflow conditions in this work the roads $0$ and $8$ are used as artificial roads to avoid posing boundary conditions at intersections.
In particular, due to the nonlocality of the dynamics, we need to impose more than just a single value boundary condition at the end of the network.
The length of these artificial roads is set to $(a_0,b_0)=(-\infty,0)$ and $(a_8,b_8)=(0,\infty)$, respectively.
For all other roads $e=1,\dots,7$ we set $(a_e,b_e)=(0,1)$.
We also remark that we do not consider the artificial roads when calculating traffic measures, i.e., the outflow of the system is measured at the end of road 7.
At the vertices $2$ and $3$ we have to prescribe distribution parameters and priority parameters at the vertices $4$ and $5$.
These are given by $\alpha_{1,2}=\alpha_{1,3}=0.5,\ \alpha_{2,4}=1/5,\ \alpha_{2,5}=4/5$ and $q_{3,6}=4/5,\ q_{4,6}=1/5,\ q_{5,7}=4/5,\ q_{6,7}=1/5$.
The velocity function on all roads $e\in\{0,\dots,8\}$ is described by
\begin{equation}\label{eq:velnetworkexample}
    v_e(\rho)=v_e^{\max}(1-\rho)
\end{equation}
and hence the maximum density on all roads is equal.
In addition, the initial conditions are given by
\[\rho_{e,0}(x)=\bar \rho_e\ \forall x\in (a_e,b_e),\ e\in\{0,\dots,8\}\]
and explicit parameters can be found in Table \ref{tab:diamondparas}.

\begin{table}[ht]
    \caption{Parameters of the diamond network}
    \label{tab:diamondparas}
    \centering
    \begin{tabular}{c| c c c c c c c c c}
        Road $e$ & 0 & 1 & 2 & 3 & 4 & 5 & 6 & 7& 8\\ \hline
        $\bar \rho_e$ &0.4 & 0.4 & 0.4 & 0.4 & 0.8& 0.4&0.8&0.2&0.2\\
        $v_e^{\max}$&0.5 &  0.5 & 2&2 &0.5&2&0.5&1&1
    \end{tabular}
\end{table}
\noindent We note that the delicate point in this network is the choice of the maximum velocities and initial densities for the roads 4 and 5.
Due to the high velocity on road 5 and the shorter distance to the end of road 7, the way over the more congested road 4 seems to be not favorable in case of measuring travel times. In all simulations, we choose a linear decreasing kernel function $\wt=2(\ndt-x)/\ndt^2$ and use $\Dx=0.01$ combined with an adaptive CFL condition determined by \eqref{eq:CFL} for all norms in each time step $t^n$.
In the following, we denote by the \emph{nonlocal maximum flux model} the network model using the coupling condition described in Example \ref{ex:12:maxflow} for the 1-to-2 junctions at the vertices 2 and 3 and the one described in Example \ref{ex:21:maxflow} for the 2-to-1 junctions at the vertices 4 and 5. 
Analogously, the \emph{nonlocal distribution model} corresponds to the coupling conditions of Example \ref{ex:12:dist} and Example \ref{ex:21:prio}.

For the comparison of the different network models we consider the following traffic measures, see e.~g. \cite{goatin2016speed, reilly2015adjoint,treiber2013traffic}:
\begin{itemize}
    \item total travel time
    \begin{align*}
        TTT=\sum_{e=1}^7 \int_0^T\int_0^{1} \rho_e(t,x) dx dt,
    \end{align*}
    \item outflow
    \begin{align*}
        O= \int_0^T f_7(t, 1, \rho_7(t,1) )dt,
    \end{align*}
    \item congestion measure
    \begin{align*}
        CM=\sum_{e=1}^7 \int_0^T\max\left\lbrace 0, \int_0^{1} \rho_e(t,x)- \frac{f_e(t, x, \rho_e(t,x) )}{v_{e,\text{ref}}} dx\right\rbrace dt.
    \end{align*}
\end{itemize}
With some abuse of notation the flux can be either local or nonlocal depending on the models used. We also remark that the reference velocity is chosen to be
$v_{e,\text{ref}}=0.5v_e^{\max}.$

\subsection{Nonlocal models}
We start by comparing the \emph{nonlocal maximum flux} with the \emph{nonlocal distribution model}.
Therefore, we set $\ndt=0.5$ and consider the final time of $T=20$.
Figure \ref{fig:nonlocalsol} displays the approximate solution at the final time $T$.

\setlength{\fwidth}{0.8\textwidth}
\begin{figure}[h!]
\centering
\input{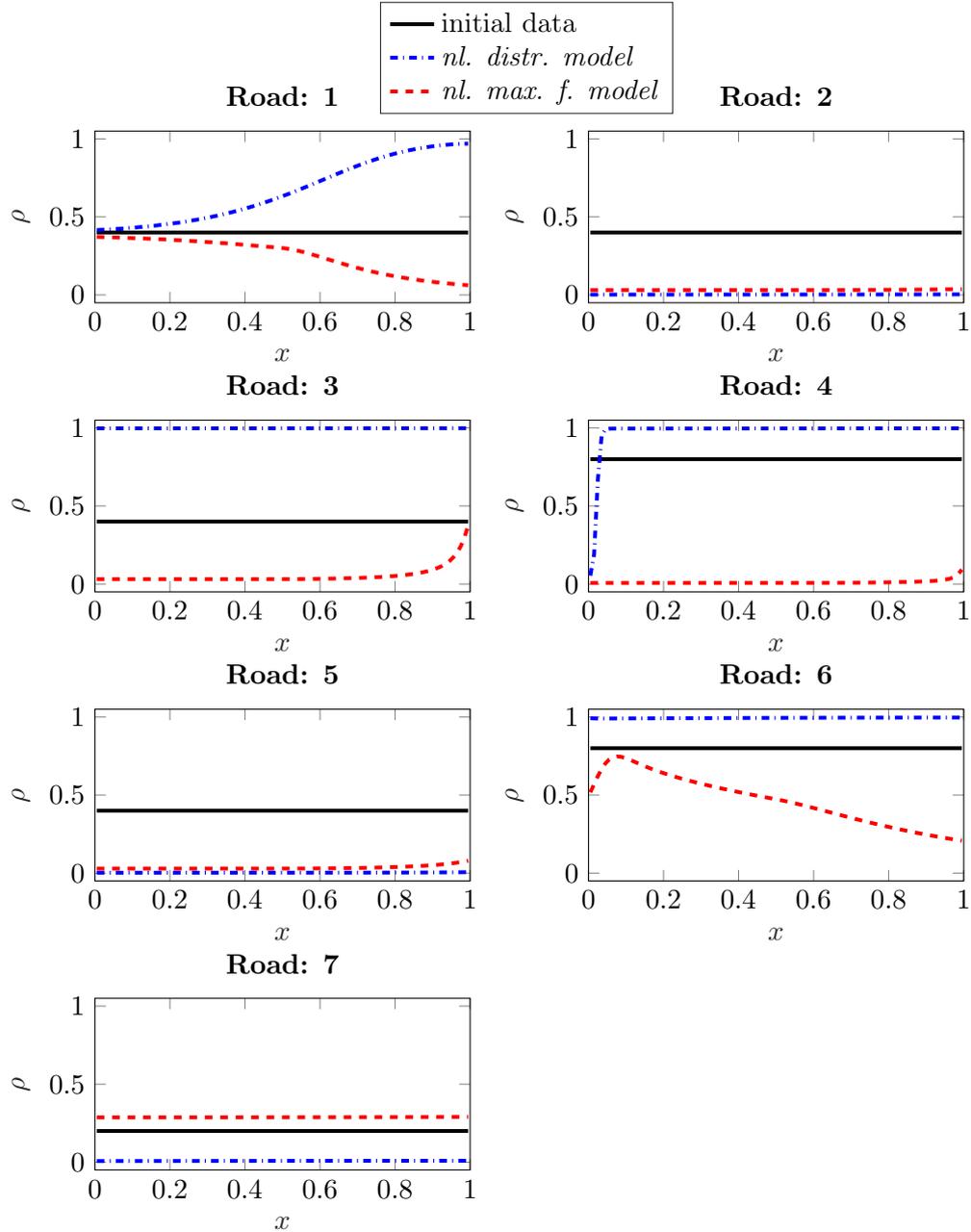}
\caption{Approximate solutions of the \emph{nonlocal maximum flux} (\emph{nl. max. f. model}) and the \emph{nonlocal distribution model} (\emph{nl. dist. model}) at $T=20$ for different roads and $\ndt=0.5$.}\label{fig:nonlocalsol}
\end{figure}

Apparently, the \emph{nonlocal distribution model} results in a congested network.
Over time road 6 becomes even further congested with the traffic jam moving backwards to road 4 and also to road 1 over road 3 while the other roads are rather empty.

  \begin{table}[ht]
        \caption{Traffic measures}
      \label{tab:nonlocalmodels}
      \centering
      \begin{tabular}{c|c c c}
          model & Outflow   & $TTT$   &   congestion  \\ \hline
             \emph{distribution} &    2.1531&     59.696&      48.744\\  
    \emph{maximum flux}&   4.6774&     36.011&      16.144
      \end{tabular}
  \end{table}

These effects can be also recognized in Table \ref{tab:nonlocalmodels} where the outflow of the \emph{nonlocal maximum flux model} is more than twice as high as in the congested \emph{nonlocal distribution model}. Similar observations can be made for the total travel time ($TTT$) and the congestion measure. 
The main reason for this behavior is that in the \emph{nonlocal distribution model} the prescribed distribution rates must be exactly kept.
On the contrary, the \emph{nonlocal maximum flux} does not necessarily fulfill them exactly.
In particular, the actual distribution\footnote{Actual means that the inflows of the outgoing roads over time are divided by the outflow of the incoming road.} at the vertex 3 over time shows that the distribution onto road $5$ is in the interval $[0.93,0.98]$ instead of the prescribed value of $0.8$.
As a consequence less vehicles enter road 4 and a traffic jam can be avoided.
Furthermore, in the \emph{nonlocal maximum flux model} the actual priority parameters at vertex $5$ start to change away from the prescribed values around approximately $t=4$ such that for $t\in [5,20]$ the ratio from road $6$ to $7$ is higher as from road $5$ to $7$.
This helps to resolve the traffic jam at road $7$.
The \emph{nonlocal distribution model} model keeps again the prescribed parameters and is not able to resolve the traffic jam.

This example also demonstrates that both models are in line with the original modeling ideas:
The \emph{nonlocal distribution model} model obeys the distribution rates (by construction) while the \emph{nonlocal maximum flux model} achieves maximum fluxes.

\subsection{Nonlocal vs. local models}
Next, we compare the nonlocal modeling approaches to local network models.
These are described by \eqref{eq:network_model} with
\[f_e(t,x,\left\lbrace \rho_k\right\rbrace_{k\in E})=\rho_e(t,x)v_e\left(\rho_e(t,x)\right)\]
for $e\in E$, $t>0$ and $x\in (a_e,b_e)$.
The local model is equipped with coupling conditions satisfying demand and supply functions, cf. \cite{goatin2016speed}:
\begin{equation}\label{eq:supplyanddemand}
D_e(\rho)=\begin{cases}
\rho v_e(\rho), &\text{if }\rho\leq \sigma_e,\\
\sigma_e v_e(\sigma_e), &\text{if }\rho >\sigma_e,
\end{cases}
\qquad
S_e(\rho)=\begin{cases}
\sigma_e v_e(\sigma_e), &\text{if }\rho\leq \sigma_e,\\
\rho v_e(\rho), &\text{if }\rho >\sigma_e,
\end{cases}
\end{equation}
where $\sigma_e$ is the maximum point of the flux function $\rho v_e(\rho)$\footnote{We note that the choice of the velocity function in \eqref{eq:velnetworkexample} ensures the existence of a unique maximum point.}.
As already mentioned the coupling conditions for the \emph{nonlocal maximum flux model} are also inspired by this approach.
For completeness these are in the local model for the 1-to-2 junction (using the notation from the previous sections)
\begin{align*}
    f_e(t,0+,\rho_1,\rho_e)&=\min\{\alpha_{1,e}D_1(\rho_1(t,0-)),S_e(\rho_e(t,0+))\},\quad e\in\{2,3\}\\
    f_1(t,0-,\rho_1,\rho_2,\rho_3)&=f_2(t,0+,\rho_1,\rho_2)+f_3(t,0+,\rho_1,\rho_3)
\end{align*}
and for the 2-to-1 junction:
\begin{align*}
    &f_e(t,0-,\rho_e,\rho_{e^-})=\min\{D_e(\rho_{e}(t,0-)),\max\{q_{e,3} S_3(\rho_3(t,0+)),S_3(\rho_3(t,0+))-D_{e^-}(\rho_{e^-}(t,0-))\}\}\\
    &f_3(t,0+,\rho_1,\rho_2,\rho_3)=f_1(t,0-,\rho_1,\rho_2)+f_2(t,0-,\rho_2,\rho_1).
\end{align*}
In contrast the local coupling conditions inspiring the \emph{nonlocal distribution model} can be found in \cite{garavellohanpiccoli2016book} and can be summarized as:
for the 1-to-2 junction
\begin{align*}
    f_1(t,0-,\rho_1,\rho_2,\rho_3)&=\min\{D_1(\rho_1(t,0-)),S_2(\rho_2(t,0+))/\alpha_{1,2},S_3(\rho_3(t,0+))/\alpha_{1,3}\}\\
    f_e(t,0+,\rho_1,\rho_e)&=\alpha_{1,e}f_1(t,0-,\rho_1,\rho_2,\rho_3),\quad e\in\{2,3\}
\end{align*}
and for the 2-to-1 junction:
\begin{align*}
    f_e(t,0-,\rho_e,\rho_{e^-})&=\min\{D_e(\rho_{e}(t,0-)),q_{e,3}/q_{e^-,3}D_{e^-}(\rho_{e^-}(t,0-)),q_{e,3} S_3(\rho_3(t,0+))\}\\
    f_3(t,0+,\rho_1,\rho_2,\rho_3)&=f_1(t,0-,\rho_1,\rho_2)+f_2(t,0-,\rho_2,\rho_1).
\end{align*}
In order to solve the local models numerically we use the Godunov scheme with the coupling conditions as presented in \cite{goatin2016speed}.

\setlength{\fwidth}{0.5\textwidth}
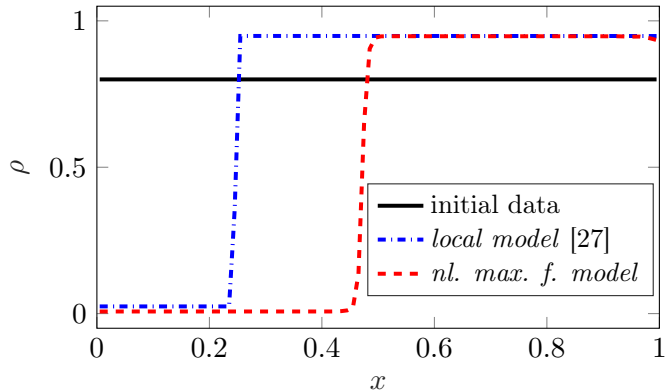
\begin{figure}[ht]
\centering
%
%
\begin{tikzpicture}

\begin{axis}[%
width=0.951\fwidth,
height=0.549\fwidth,
at={(0\fwidth,0\fwidth)},
scale only axis,
xmin=0,
xmax=1,
xlabel style={font=\color{white!15!black}},
xlabel={$x$},
ymin=-0.05,
ymax=1.05,
ylabel style={font=\color{white!15!black}},
ylabel={$\rho$},
axis background/.style={fill=white},
legend style={at={(0.99,0.45)},legend cell align=left, align=left, draw=white!15!black,font=\small}
]
\addplot [color=black, line width=1.5pt]
  table[row sep=crcr]{%
0.005	0.8\\
0.995	0.8\\
};
\addlegendentry{initial data}

\addplot [color=blue, dashdotted, line width=1.5pt]
  table[row sep=crcr]{%
0.005	0.0246054270398115\\
0.235	0.024605427039864\\
0.245	0.356831852049792\\
0.255	0.948437395831202\\
0.995	0.948191451112349\\
};
\addlegendentry{\emph{local model} \cite{goatin2016speed}}

\addplot [color=red, dashed, line width=1.5pt]
  table[row sep=crcr]{%
0.005	0.00726037347246067\\
0.415	0.00732439118354389\\
0.425	0.00763716249314161\\
0.435	0.00859742143420883\\
0.445	0.0110013340887527\\
0.455	0.0187472536944775\\
0.465	0.134384635431842\\
0.475	0.649016784948384\\
0.485	0.904674358481893\\
0.495	0.942703836760103\\
0.505	0.946976624510711\\
0.515	0.947438027333322\\
0.545	0.947486825038852\\
0.895	0.947317292208527\\
0.905	0.947221404443864\\
0.925	0.947251647016505\\
0.935	0.946836070856792\\
0.945	0.945833995409545\\
0.965	0.946305882595702\\
0.975	0.943832983745418\\
0.985	0.939624979812737\\
0.995	0.934492213823163\\
};
\addlegendentry{\emph{nl. max. f. model}}

\end{axis}

\end{tikzpicture}%
\caption{Approximate solution of the \emph{local model} with the coupling conditions from \cite{goatin2016speed} and the \emph{nonlocal maximum flux model} (\emph{nl. max. f. model}) at $T=20$ for $\ndt=0.05$ on road 4.}\label{fig:solutionlocal}
\end{figure}

Figure \ref{fig:solutionlocal} shows the approximate solutions of the local supply and demand approach and the \emph{nonlocal maximum flux model} at $T=20$ and for $\ndt=0.05$ on road $4$.
It can be seen that both models lead to different solutions at road $4$ as the end of the traffic jam is located further downstream in the nonlocal models.
Considering smaller values of $\ndt$ (and also $\Dx$) this behavior does not change.
Hence, this example provides numerical evidence that in the network case for the \emph{nonlocal maximum flux model} no convergence for $\ndt\to 0$ to the local network model can be expected.
We remark that in \cite{chiarelloFriedrichGoatinGK} it was observed that the simple 1-to-1 junction tends towards the vanishing viscosity solution.
This seems also not the case for the \emph{nonlocal maximum flux model}.
The vanishing viscosity solution of the considered network behaves very similar to the solution obtained by the supply and demand approach and hence convergence can be ruled out. 
\begin{remark}
Note that for the vanishing viscosity approach no distribution and priority parameters have to be prescribed and the maximum densities on all roads have to be equal. However, an approximate solution can be obtained by the numerical scheme presented in \cite{towers2020explicit}.
\end{remark}
Let us now compare the traffic measures for the \emph{nonlocal maximum flux model} with $\ndt\in \{0.5,0.25,0.1, 0.05\}$ and the corresponding local model.
Obviously, the nonlocal models perform better than the local model regarding the traffic measures while the advantages increase even further with larger nonlocal range $\ndt$.

  \begin{table}[ht]
        \caption{Traffic measures for the \emph{nonlocal maximum flux model} and the local model with the coupling conditions from \cite{goatin2016speed} at $T=20$.}
      \label{tab:local}
      \centering
      \begin{tabular}{c|c c c}
          model & Outflow   & $TTT$   &   congestion  \\ \hline
    $\ndt=0.5$ & 4.6774 &    36.011  &    16.144   \\
    $\ndt=0.25$& 4.3651  &   39.589   &   19.114\\
    $\ndt=0.1$&4.1546    & 42.432     & 21.611\\
    $\ndt=0.05$&4.0719   &  43.627    &  22.752 \\
    \cite{goatin2016speed} &3.7862   &  47.268     &  26.09   
      \end{tabular}
  \end{table}

Now, we compare the \emph{nonlocal distribution model} to the corresponding local model with coupling conditions from \cite{garavellohanpiccoli2016book}.
For small time and large $\ndt$ the nonlocal solution provides different dynamics as the local model.
Even though these effects are less for larger times and/or smaller values of $\ndt$ as displayed in Figure \ref{fig:differenttime}.
Here, we selected exemplary two roads, namely road 1 and road 4.
For small time periods we see that the solution with large $\ndt$ is different to the local solution but has a kind of  smoothing effect across the transition area.
For $\ndt$ small at $T=1$ and $T=20$, the nonlocal approximate solution suggests a numerical convergence to the local approximate solution.
It is interesting to see that for larger times and $\ndt$ the nonlocal effects become less significant.
In particular, road $1$ is the only road displaying visible differences between the local and nonlocal solution while the other roads behave similarly to road $4$.
Nevertheless, even on road $1$ both models result in a traffic jam but they differ how the transition from free flow to traffic jam is created.

\setlength{\fwidth}{0.8\textwidth}
\begin{figure}[ht]
\centering
\input{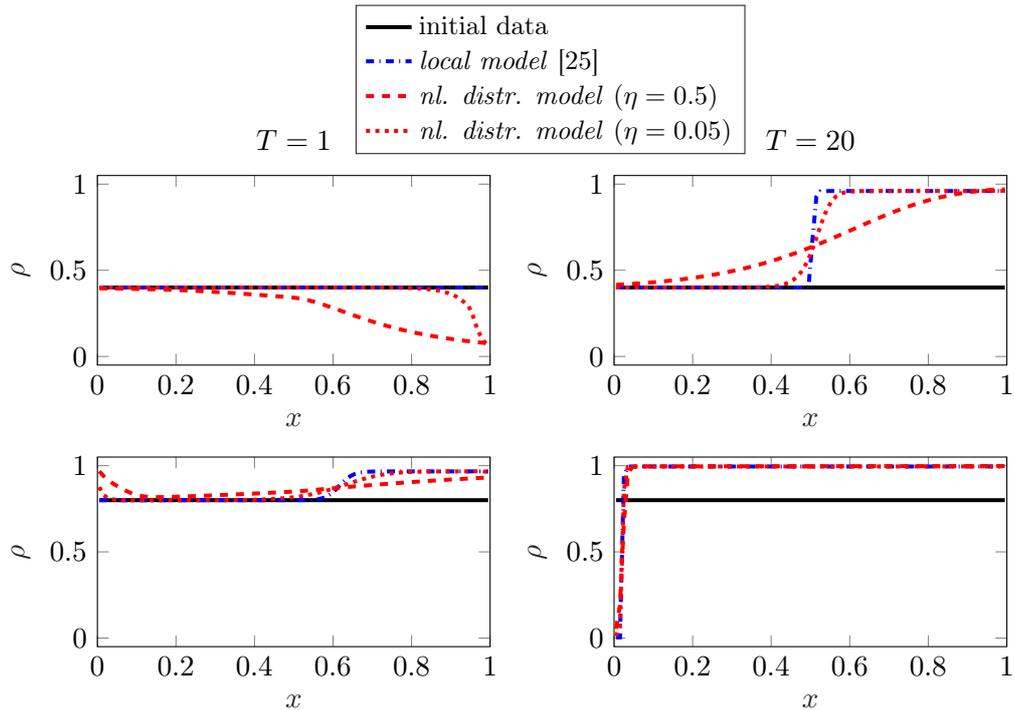}
\caption{Approximate solutions of the \emph{local model} with the coupling conditions from \cite{garavellohanpiccoli2016book} and the \emph{nonlocal distribution model} (\emph{nl. distr. model}) with different values of $\ndt$ at $T=1$ (left column) and $T=20$ (right column) for the roads 1 (top row) and 4 (bottom row).}\label{fig:differenttime}
\end{figure}

In addition, the traffic measures displayed in Table \ref{tab:localdist} computed at the final time $T=20$ support the observation that in this example the \emph{nonlocal distribution model} model behaves similarly as its local counterpart.

  \begin{table}[ht!]
  \caption{Traffic measures for the \emph{nonlocal distribution model} and the local model with the coupling conditions from \cite{garavellohanpiccoli2016book} at $T=20$.}
      \label{tab:localdist}
      \centering
      \begin{tabular}{c|c c c}
          model & Outflow   & $TTT$   &   congestion  \\ \hline
    $\ndt=0.5$ & 2.1531  &   59.696     & 48.744     \\
    $\ndt=0.25$& 2.1485  &   60.189     & 48.219\\
    $\ndt=0.1$&2.1455    & 60.665       &47.96\\
    $\ndt=0.05$&2.1446   &  60.858       & 47.9 \\
    \cite{garavellohanpiccoli2016book} &2.1434    & 61.192  &    47.782   
      \end{tabular}
  \end{table}

Nevertheless, we intend to stress that these observations are only due to the choice of the specific example.
We notice that in other scenarios the differences between the \emph{nonlocal distribution model} and its local counterpart are more significant for large times and $\ndt$, even though for smaller values of $\ndt$ the solution seems to numerically converge to the local one.
In our numerical study we do not find any example to rule out the convergence as we do for the \emph{nonlocal maximum flux model}.

\section*{Conclusion}
We have introduced a network model for nonlocal traffic. The modeling is essentially based on coupling conditions for 1-to-1, 2-to-1 and 1-to-2 junctions.
Using a finite volume numerical scheme we can show a maximum principle and the existence of weak solutions for the network model. Further investigations include the consideration of 
the limiting behaviour for $\ndt \to \infty$. 
The numerical simulations demonstrate the ideas of the proposed junction models. We also investigate 
the limit $\ndt \to 0$ numerically and notice that in case of the \emph{nonlocal maximum flux model} convergence to its local counterpart can be ruled out while 
in the \emph{nonlocal distribution model} convergence can be observed.

Future work will include the investigation of a general $M$-to-$N$ junction.
We also intend to derive coupling conditions for other nonlocal modeling equations, such as the second order traffic flow model proposed in \cite{friedrich2020micromacro}.

\section*{Acknowledgments} 
The financial support of the DFG project GO1920/10 is acknowledged.

\bibliographystyle{siam}
\bibliography{literatur}

\end{document}